\theoremstyle{definition}
\newtheorem{thm}{Theorem}[section]
\newtheorem{lem}[thm]{Lemma}
\newtheorem{cor}[thm]{Corollary}
\newtheorem{prop}[thm]{Proposition}
\theoremstyle{definition}
\newtheorem{rem}[thm]{Remark}
\numberwithin{equation}{section}
\def\F{{\mathbb F}}
\def\Q{{\mathbb Q}}
\def\Z{{\mathbb Z}}
\def\C{{\mathbb C}}
\def\PP{{\mathbb P}}
\def\Aut{\mathop{\mathrm{Aut}}\nolimits}
\def\Gal{\mathop{\mathrm{Gal}}\nolimits}
\def\Jac{\mathop{\mathrm{Jac}}\nolimits}
\def\Pic{\mathop{\mathrm{Pic}}\nolimits}
\def\id{\mathop{\mathrm{id}}\nolimits}
\def\GL{\mathop{\mathrm{GL}}\nolimits}
\def\ord{\mathop{\mathrm{ord}}}
\begin{document}

\title[Mod 4 Galois representation associated with the Fermat quartic]
{Explicit calculation of the mod 4 Galois representation associated with the Fermat quartic}

\author{Yasuhiro Ishitsuka}
\address{Department of Mathematics, Faculty of Science, Kyoto University, Kyoto 606--8502, Japan}
\email{yasu-ishi@math.kyoto-u.ac.jp}

\author{Tetsushi Ito}
\address{Department of Mathematics, Faculty of Science, Kyoto University, Kyoto 606--8502, Japan}
\address{Mathematical Science Team, RIKEN Center for Advanced Intelligence Project (AIP),1--4--1 Nihonbashi, Chuo-ku, Tokyo 103--0027, Japan}
\email{tetsushi@math.kyoto-u.ac.jp}

\author{Tatsuya Ohshita}
\address{Department of Mathematics, Faculty of Science and Technology, Keio University, 3--14--1 Hiyoshi, Kohoku-ku, Yokohama-shi, Kanagawa 223--8522, Japan}
\email{ohshita.tatsuya.nz@ehime-u.ac.jp}

\date{September 29, 2019}
\subjclass[2010]{Primary 11D41; Secondary 14H50, 14K15, 14K30}
\keywords{Fermat quartic; Jacobian varieties; rational points; Galois representation}

\begin{abstract}
We use explicit methods to study
the $4$-torsion points on the Jacobian variety of the Fermat quartic.
With the aid of computer algebra systems,
we explicitly give a basis of the group of $4$-torsion points.
We calculate the Galois action,
and show that the image of the mod $4$ Galois representation
is isomorphic to the dihedral group of order $8$.
As applications,
we calculate the Mordell-Weil group of the Jacobian variety of
the Fermat quartic over each subfield of the $8$-th cyclotomic field.
We determine all of the points on the Fermat quartic defined
over quadratic extensions of the $8$-th cyclotomic field.
Thus we complete Faddeev's work in 1960.
\end{abstract}

\maketitle

\section{Introduction}

In this paper, we use explicit methods to study
rational points and divisors on the \textit{Fermat quartic} $F_4 \subset \PP^2$
defined by the homogeneous equation
\[ X^4 + Y^4 = Z^4. \]
We study the $4$-torsion points $\Jac(F_4)[4]$
on the Jacobian variety $\Jac(F_4)$.
Then we give applications to the Mordell-Weil group of $\Jac(F_4)$
and the rational points on $F_4$.

In the 17th century, Fermat proved that
all of the $\Q$-rational points on $F_4$ satisfy $XYZ = 0$.
(The points on $F_4$ satisfying $XYZ = 0$ are called the \textit{cusps}.)
Since the work of Fermat, the arithmetic of $F_4$ has been studied
by many mathematicians because, together with the Fermat curves of higher degree,
it provides a nice testing ground of more general theories.

In 1960, Faddeev studied the arithmetic of $\Jac(F_4)$ using
methods from algebraic geometry \cite{Faddeev}.
He constructed an isogeny of degree $8$ defined over $\Q$
from $\Jac(F_4)$ to the product of three elliptic curves.
Since the Mordell-Weil groups of these elliptic curves over $\Q$
are finite of $2$-power order,
it follows that the Mordell-Weil group $\Jac(F_4)(\Q)$
is also finite of $2$-power order.

It is an interesting problem to study the $2$-power torsion points.
But the situation was not clear in the literature.
In \cite{Faddeev}, Faddeev claimed (without proof) that
$\Jac(F_4)(\Q)$ has order $32$,
and the Mordell-Weil group of $\Jac(F_4)$
over the $8$-th cyclotomic field $\Q(\zeta_8)$ is finite.
(But he did not mention the structure of the Mordell-Weil group.)
He alluded that he could determine all of the points on $F_4$
defined over quadratic extensions of $\Q(\zeta_8)$;
see \cite[p.1150]{Faddeev}.
As far as the authors of this paper know,
the precise statements of Faddeev's claims (and proofs of them)
did not appear in the literature.
Nevertheless, Faddeev's results and claims were cited several times.
Kenku used them to study $2$-power torsion points
on elliptic curves over quadratic fields \cite{Kenku}.
Klassen revisited Faddeev's results in his thesis \cite{Klassen:Thesis}.
Schaefer-Klassen used the finiteness of $\Jac(F_4)(\Q(\zeta_8))$
to study the torsion packet on $F_4$ \cite[Section 6]{KlassenSchaefer}.

In this paper,
we use explicit methods to study the $4$-torsion points
$\Jac(F_4)[4]$ on $\Jac(F_4)$.
We give an explicit description of
the action of the absolute Galois group
$\Gal(\overline{\Q}/\Q)$ on $\Jac(F_4)[4]$.
As applications of our results,
we give precise statements alluded by Faddeev, and prove them.
Thus we fill the gap in the literature.

Here is a summary of the results we obtain in this paper:
\begin{enumerate}
\item (Theorem \ref{MainTheorem1})\ We give $6$ divisors of degree $0$ on $F_4$
which give a basis of $\Jac(F_4)[4]$ as a $\Z/4\Z$-module.
Five of them are supported on the cusps, but one of them is not.

\item (Theorem \ref{MainTheorem2})\ We calculate
the mod $4$ Galois representation:
\[ \rho_4 \colon \Gal(\overline{\Q}/\Q) \to \Aut(\Jac(F_4)[4]) \cong \GL_6(\Z/4\Z). \]
The image of $\rho_4$ is isomorphic to
the dihedral group of order $8$,
and the kernel of $\rho_4$ corresponds to the number field $\Q(2^{1/4},\zeta_8)$,
which is a quadratic extension of $\Q(\zeta_8)$.
(We also calculate the Weil pairing.
We determine the image of $\rho_4$
inside the symplectic similitude group $\mathrm{GSp}_6(\Z/4\Z)$.
See Theorem \ref{Theorem:WeilPairing} and Corollary \ref{Corollary:WeilPairingGSp(6)}.)

\item (Theorem \ref{MainTheorem3})\ We calculate
the Mordell-Weil group $\Jac(F_4)(\Q(\zeta_8))$.
We show $\Jac(F_4)(\Q(\zeta_8))$ is isomorphic to
$(\Z/4\Z)^{\oplus 5} \oplus \Z/2\Z$ generated by
divisors supported on the cusps.
(We also calculate the Mordell-Weil group of $\Jac(F_4)$
and its generators over each subfield of $\Q(\zeta_8)$
(i.e., over $\Q$, $\Q(\sqrt{-1})$, $\Q(\sqrt{2})$, and $\Q(\sqrt{-2})$).
See Appendix \ref{Appendix:MordellWeilGroupSubfields}.)

\item (Theorem \ref{MainTheorem4})\
We determine all of the points on $F_4$
defined over quadratic extensions of $\Q(\zeta_8)$.
We list all of them.
It turns out that $F_4$ has $188$ such points in total.
Except for the $12$ cusps, none of them is defined over $\Q(\zeta_8)$.
There exist $48$ points defined over $\Q(2^{1/4},\zeta_8)$,
$32$ points defined over $\Q(\zeta_3, \zeta_8)$,
and $96$ points defined over $\Q(\sqrt{-7}, \zeta_8)$.
\end{enumerate}

Note that we take advantage of using results and techniques
which were not available in 1960's.
Especially, our proof depends on Rohrlich's results on the subgroup of
the Mordell-Weil group generated by
divisors supported on the cusps \cite{Rohrlich}.
We obtained and confirmed key computational results with
the aid of computer algebra systems Maxima, Sage, and Singular
\cite{Maxima}, \cite{Sage}, \cite{Singular}, \cite{Singular:DivisorsLib}.

The organization of this paper is as follows.
In Section \ref{Section:Notation},
we introduce necessary notation on the Galois group and
divisors on the Fermat quartic $F_4$.
In Section \ref{Section:Rohrlich},
we recall Rohrlich's results on the cusps.
In Section \ref{Section:Mod4},
we give a basis of the $4$-torsion points $\Jac(F_4)[4]$.
In Section \ref{Section:GaloisAction}, we calculate the Galois action.
Using these results, the calculation of
the Mordell-Weil group of $\Jac(F_4)$ over $\Q(\zeta_8)$
becomes an exercise in linear algebra.
We calculate it in Section \ref{Section:MordellWeilGroupQZeta8}.
In Section \ref{Section:QuadraticPoints},
we determine all of the points on $F_4$
defined over quadratic extensions of $\Q(\zeta_8)$.
This paper has $5$ appendices.
In Appendix \ref{Appendix:MordellWeilGroupSubfields},
we calculate the Mordell-Weil group of $\Jac(F_4)$
over each subfield of $\Q(\zeta_8)$.
In Appendix \ref{Appendix:WeilPairing},
we calculate the Weil pairing on $\Jac(F_4)[4]$.
In Appendix \ref{Appendix:Automorphisms},
we calculate the action of the automorphism group of $F_4$
on the $4$-torsion points $\Jac(F_4)[4]$.
In Appendix \ref{Appendix:Experimental},
we briefly explain the authors' experimental methods to find a $4$-torsion point
which does not belong to the subgroup generated by divisors
supported on the cusps.
Finally, in Appendix \ref{Appendix:MethodsCalculation},
we give several remarks on the methods of calculation.

The initial motivation of this work was to
understand Faddeev's results and claims in \cite{Faddeev},
and to apply them to study linear and symmetric determinantal representations of
the Fermat quartic $F_4$ over $\Q$.
Such applications will appear elsewhere;
see \cite{IshitsukaItoOhshita:KleinFermatQuartic}
for the summary of our results.

\section{Notation}
\label{Section:Notation}

In this section, we shall introduce necessary notation on
the Galois group and divisors which will be used in the rest of
this paper.

We fix an embedding of an algebraic closure $\overline{\Q}$ of $\Q$
into the field $\C$ of complex numbers.
We also put
$\zeta_n := \exp(2 \pi i/n)$.
Hence we have $(\zeta_{mn})^m = \zeta_n$ for any $n,m \geq 1$.
For a positive integer $a \geq 1$, we put $\sqrt{-a} := \sqrt{a} \zeta_4$.
The element $2^{1/4}$ denotes
a unique positive real number whose fourth power is $2$.
The following relation holds:
$\zeta_8 + \zeta^7_8 = (2^{1/4})^2 = \sqrt{2}$.
The field $\Q(2^{1/4},\zeta_8)$
is a Galois extension of $\Q$ of degree $8$.
The Galois group $\Gal(\Q(2^{1/4},\zeta_8)/\Q)$
is generated by the following two automorphisms $\sigma,\tau$:
$\sigma$ is an element of order $4$ satisfying
$\sigma(\zeta_8) = \zeta_8^5 = -\zeta_8$ and
$\sigma(2^{1/4}) = 2^{1/4} \zeta_4$,
and $\tau$ is an element of order $2$ satisfying
$\tau(\zeta_8)   = \zeta_8^7$ and
$\tau(2^{1/4})   = 2^{1/4}$.
These elements satisfy $\tau \sigma \tau = \sigma^{3}$.
We see that $\Gal(\Q(2^{1/4},\zeta_8)/\Q)$ is isomorphic to
the dihedral group with $8$ elements.

The \textit{Fermat quartic} is a smooth projective curve over $\Q$ defined by
\[ F_4 := \{\, [X:Y:Z] \in \PP^2 \mid X^4 + Y^4 = Z^4 \,\}. \]
Its Jacobian variety $\Jac(F_4)$ is an abelian variety of dimension $3$ over $\Q$.
The group of $4$-torsion points
\[ \Jac(F_4)[4] := \{\, \alpha \in \Jac(F_4)(\overline{\Q}) \mid [4] \alpha = 0 \,\} \]
is a free $\Z/4\Z$-module of rank $6$
with an action of $\Gal(\overline{\Q}/\Q)$,
where $[4]$ denotes the multiplication-by-4 isogeny on $\Jac(F_4)$.
Since $F_4$ has a $\Q$-rational point (such as $[1 : 0 : 1]$),
for any extension $k/\Q$,
the degree $0$ part of the Picard group of $F_4 \otimes_{\Q} k$
is canonically identified with the group of $k$-rational points on $\Jac(F_4)$:
\[ \Pic^0(F_4 \otimes_{\Q} k) \cong \Jac(F_4)(k). \]
(See \cite[Chapter 8, Proposition 4]{BoschLuetkebohmertRaynaud},
\cite[Section 5.7.1]{Poonen:RationalPoints}.)

The Fermat quartic $F_4$ has $12$ points satisfying $XYZ = 0$:
\begin{align*}
A_i &:= [0 : \zeta_4^i : 1], &
B_i &:= [\zeta_4^i : 0 : 1], &
C_i &:= [\zeta_8 \zeta_4^i : 1 : 0]
\end{align*}
for $0 \leq i \leq 3$.
These points are called \textit{cusps}.
All of the cusps on $F_4$ are defined over $\Q(\zeta_8)$.
We introduce the following points defined over $\Q(2^{1/4},\zeta_8)$:
\begin{align*}
P_1 &:= [ 2^{1/4} \zeta_4 : \zeta_8 : 1 ], &
P_2 &:= [ \zeta_8 : 2^{1/4} \zeta_4 : 1 ], &
P_3 &:= [ 2^{-1/4} : 2^{-1/4} : 1 ].
\end{align*}

We embed $F_4$ into $\Jac(F_4)$ by taking $B_0$ as a base point.
For each $0 \leq i \leq 3$, we define
$\alpha_i, \beta_i, \gamma_i \in \Jac(F_4)(\overline{\Q})$ by
\begin{align*}
\alpha_i &:= [A_i - B_0], &
\beta_i  &:= [B_i - B_0], &
\gamma_i &:= [C_i - B_0],
\end{align*}
where the linear equivalence class of a divisor $D$ is denoted by $[D]$.
We define
$e_1, e_2, e_3, e_4, e_5 \in \Jac(F_4)(\overline{\Q})$ by
\begin{align*}
e_1 &:= \alpha_1, &
e_2 &:= \alpha_2, &
e_3 &:= \beta_1, &
e_4 &:= \beta_2, &
e_5 &:= \gamma_1.
\end{align*}
We also define $e_6, e'_6 \in \Jac(F_4)(\overline{\Q})$ by
\begin{align*}
e_6 &:= [A_1 + A_2 + B_1 + B_2 + C_1 + C_2 - 6 B_0], \\
e'_6 &:= [P_1 + P_2 + P_3 - 3 B_0].
\end{align*}

It turns out that the divisor $P_1 + P_2 + P_3$,
defined over $\Q(2^{1/4},\zeta_8)$, plays an important role
in the arithmetic of the Fermat quartic $F_4$.
This divisor does not seem to receive special attention before.

\section{Rohrlich's results on divisor classes of the cusps}
\label{Section:Rohrlich}

Let
$\mathscr{C} \subset \Jac(F_4)(\overline{\Q})$
be the subgroup generated by
$\alpha_i, \beta_i, \gamma_i \ (0 \leq i \leq 3)$.
(The group $\mathscr{C}$ is denoted by $\mathscr{D}^{\infty}/\mathscr{F}^{\infty}$
in \cite{Rohrlich}.)
Rohrlich determined all of the relations between $\alpha_i, \beta_i, \gamma_i$,
and calculated the group $\mathscr{C}$.

Here is a summary of Rohrlich's results:

\begin{prop}[Rohrlich {\cite[p.117, Corollary 1]{Rohrlich}}]
\label{Proposition:Rohrlich}
The group $\mathscr{C}$ is isomorphic to the abelian group
generated by  $\alpha_i, \beta_i, \gamma_i$ ($0 \leq i \leq 3$)
with the following relations:
\begin{align*}
0 &= 4\alpha_i = 4\beta_i = 4\gamma_i, \\
0 &= \alpha_0 + \alpha_1 + \alpha_2 + \alpha_3
   = \beta_1 + \beta_2 + \beta_3
   = \gamma_0 + \gamma_1 + \gamma_2 + \gamma_3, \\
0 &= \alpha_1 + \beta_1 + 2(\alpha_2 + \beta_2) +3( \alpha_3 + \beta_3), \\
0 &= \beta_1 + \gamma_1 + 2(\beta_2 + \gamma_2) + 3(\beta_3 + \gamma_3), \\
0 &= 2 (\alpha_1 + \beta_1 + \gamma_1 + \alpha_2 + \beta_2 + \gamma_2).
\end{align*}
\end{prop}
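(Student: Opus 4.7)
The plan is in two stages: first, verify each listed relation by exhibiting an explicit rational function on $F_4$ whose divisor is supported on the cusps; second, check completeness by computing the order of the presented group and matching it with $|\mathscr{C}|$.

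The first block of relations comes from tangent and coordinate lines on the plane quartic $F_4$. The linear form $Y - \zeta_4^j Z$ vanishes on $F_4$ only at $A_j$, because substituting $Y = \zeta_4^j Z$ into $X^4 + Y^4 = Z^4$ forces $X = 0$; by B\'ezout the intersection multiplicity there is $4$. Hence $\divi((Y - \zeta_4^j Z)/Z) = 4 A_j - \sum_i C_i$ on $F_4$, yielding $4\alpha_j = \sum_i \gamma_i$ in $\Jac(F_4)$. Analogous computations with $X - \zeta_4^k Z$ and $X - \zeta_8 \zeta_4^l Y$ give $4\beta_k = \sum_i \gamma_i$ and $4\gamma_l = \sum_i \beta_i$. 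Similarly $\divi((X - Z)/Y) = 4 B_0 - \sum_i B_i$ (using $\divi(X-Z) = 4 B_0$), so $\sum_i \beta_i = 0$, whence $\beta_1 + \beta_2 + \beta_3 = 0$ after recalling $\beta_0 = 0$; the analogous coordinate-line identities yield $\sum_i \alpha_i = \sum_i \gamma_i = 0$ and the full $4$-torsion of every $\alpha_j, \beta_k, \gamma_l$.

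The three remaining relations require more delicate principal divisors. For (v), a $2$-torsion statement, I would look for a rational function on $F_4$ with divisor $2(A_1 + B_1 + C_1 + A_2 + B_2 + C_2 - 6 B_0)$, built from a quadric tangent to $F_4$ at the six listed cusps divided by a suitable power of $X - Z$. For (iii) and (iv), whose divisors have total weight $12$, I would seek cubic forms $P, P' \in \Q(\zeta_8)[X,Y,Z]$ whose divisors on $F_4$ are $A_1 + 2A_2 + 3A_3 + B_1 + 2B_2 + 3B_3$ and $B_1 + 2B_2 + 3B_3 + C_1 + 2C_2 + 3C_3$ respectively; dividing each by $(X - Z)^3$ then realises the corresponding relation. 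The existence of such forms reduces to finite-dimensional linear algebra in the $10$-dimensional space of plane cubics, together with a local verification at each prescribed cusp (using a uniformizer of $F_4$) that the prescribed multiplicities are attained without extraneous zeros.

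For completeness, the abelian group $G$ presented by the stated generators and relations has order exactly $2^{11} = 2048$: after the three degree-$4$ sum relations eliminate $\alpha_0, \beta_3, \gamma_0$, we are reduced to a quotient of $(\Z/4)^8$, and Smith normal form on the remaining three relations cuts this down to $(\Z/4)^5 \oplus \Z/2$. A matching lower bound $|\mathscr{C}| \ge 2^{11}$ can be obtained by reducing $\Jac(F_4)$ modulo a prime of good reduction (e.g., $p = 3$) and checking injectivity of the reduction map on the candidate list of $2^{11}$ elements, or by an eigenspace analysis for the automorphism action $(X,Y,Z) \mapsto (\zeta_4^a X, \zeta_4^b Y, Z)$ of $F_4$. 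The main technical obstacle will be the construction and verification of the cubic and quadric forms realising (iii), (iv), (v): the linear algebra itself is routine, but the local multiplicity computations at the smooth points $A_j, B_j, C_j$ of $F_4$ must be done carefully to rule out extraneous zeros.
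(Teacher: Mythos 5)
You should first note that the paper does not prove this proposition: it is quoted verbatim from Rohrlich with a citation to \cite[p.117, Corollary 1]{Rohrlich}, where the hard direction (that the listed relations are \emph{all} of the relations) rests on Rohrlich's classification of every rational function on the Fermat curve whose divisor is supported at the points at infinity, carried out via the eigenspace decomposition for the action of the automorphisms $[X:Y:Z]\mapsto[\zeta_4^aX:\zeta_4^bY:Z]$. So any self-contained argument is necessarily a different route from the paper's. Your two-stage plan (exhibit a principal divisor for each relation, then match $|\mathscr{C}|$ against the order of the presented group) is a legitimate such route, and your first block is correct: $\divi\bigl((Y-\zeta_4^jZ)/Z\bigr)=4A_j-\sum_iC_i$ and its analogues do give the degree-one sum relations and the $4$-torsion of all generators.

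There are, however, genuine gaps. First, relation (v) cannot be realized by a conic: the effective part of $2(A_1+A_2+B_1+B_2+C_1+C_2)$ has degree $12$, while a plane conic meets $F_4$ in only $8$ points with multiplicity; you need a \emph{cubic} tangent to $F_4$ at the six cusps, divided by $(X-Z)^3$. Second, and more importantly, for each of (iii), (iv), (v) the existence of the required cubic is exactly \emph{equivalent} to the relation being proved: the restriction map from plane cubics to $H^0(F_4,\O_{F_4}(3))$ is an isomorphism of $10$-dimensional spaces, and you are imposing $12$ linear conditions, so a nonzero solution exists if and only if the prescribed degree-$12$ divisor is linearly equivalent to $12B_0$. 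Until the forms are actually produced and their divisors verified locally, nothing has been established; this is where all the content lies. On the completeness side, two points need care: as literally presented the group in the statement retains $\beta_0$ as a generator subject only to $4\beta_0=0$, so your order count of $2^{11}$ tacitly uses $\beta_0=0$ (otherwise the presented group has order $2^{13}$); and in your mod-$3$ strategy the injectivity of reduction on $\mathscr{C}$ is automatic (prime-to-$3$ torsion, good reduction at $3$), so what must actually be checked is that the $2^{11}$ candidate classes remain pairwise distinct in $\Jac(\widetilde{F}_4)(\F_9)$ --- a finite but unexecuted computation. The mod-$3$ idea is sound and is very much in the spirit of the reduction arguments the paper uses elsewhere for the Mordell--Weil group, but as it stands the proposal is a plan rather than a proof.
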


\begin{cor}
\label{Corollary:Rohrlich}
\begin{enumerate}
\item $\mathscr{C}$ is a finite abelian group killed by $4$.
\item $e_6$ is killed by $2$.
\item The following homomorphism is an isomorphism:
\begin{align*}
  (\Z/4\Z)^{\oplus 5} \oplus (\Z/2\Z) &\overset{\cong}{\longrightarrow} \mathscr{C}, \\
  (c_1,c_2,c_3,c_4,c_5,c_6) &\mapsto \sum_{i=1}^{5} c_i e_i + c_6 e_6.
\end{align*}
\end{enumerate}
\end{cor}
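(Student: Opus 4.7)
The plan is to deduce all three parts directly from the presentation of $\mathscr{C}$ given in Proposition \ref{Proposition:Rohrlich}. Part (1) is immediate: the first line of relations asserts that each generator is killed by $4$, so the whole group is annihilated by $4$ and, being finitely generated, is finite. For (2), since $B_0$ is the base point we have $\beta_0=0$, and hence
\[
e_6 \,=\, [A_1+A_2+B_1+B_2+C_1+C_2 - 6B_0] \,=\, \alpha_1+\alpha_2+\beta_1+\beta_2+\gamma_1+\gamma_2.
\]
The final relation of Proposition \ref{Proposition:Rohrlich} is precisely $2e_6=0$.

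Parts (1) and (2) make the map $\varphi \colon (\Z/4\Z)^{\oplus 5}\oplus(\Z/2\Z)\to\mathscr{C}$ well-defined. To prove surjectivity I would express each of the twelve generators $\alpha_i,\beta_i,\gamma_i$ as a $\Z$-linear combination of $e_1,\dots,e_6$. Using $\beta_0=0$ together with the three ``sum'' relations, one eliminates $\alpha_0,\beta_3,\gamma_0$ immediately; the defining equation of $e_6$ then gives $\gamma_2 = e_6-e_1-e_2-e_3-e_4-e_5$; and Rohrlich's two ``mixed'' linear relations let us solve for the remaining unknowns $\alpha_3$ and $\gamma_3$ as explicit $\Z/4\Z$-linear combinations of $e_1,\dots,e_5$ (with an even coefficient on $e_6$, as must happen by (2)).

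For injectivity I would compare orders: the domain of $\varphi$ has $4^5\cdot 2=2048$ elements, so it is enough to show $|\mathscr{C}|\ge 2048$. The natural route is to encode Proposition \ref{Proposition:Rohrlich} as an integer presentation matrix on the twelve generators and compute its Smith normal form, which should yield elementary divisors $(1,1,1,1,1,1,2,4,4,4,4,4)$, exactly matching the target. The main obstacle is just bookkeeping: because we work in a group of exponent $4$, coefficients are determined only modulo $4$ (or modulo $2$ in the case of $e_6$), and one must verify carefully that after all the substitutions above Rohrlich's mixed relations impose \emph{no} further identity on $e_1,\dots,e_6$ beyond the orders already built into the source of $\varphi$.
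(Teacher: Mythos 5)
Your proposal is correct and follows essentially the route the paper implicitly takes: the paper gives no proof of this corollary, but the identities it records immediately afterwards ($\alpha_0 = 2e_1+e_2+2e_3+e_4$, $\beta_3 = 3e_3+3e_4$, $\gamma_3 = 2e_1+2e_2+e_4+3e_5$, etc.) are exactly the substitutions you describe for surjectivity, and the injectivity/order count via the presentation does come out to $2\cdot 4^5=2048$ as you predict. The only point worth making explicit is that $\beta_0=0$ is not literally among the relations listed in Proposition \ref{Proposition:Rohrlich} and has to be supplied from the definition $\beta_0=[B_0-B_0]=0$, which you do.
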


For later use, we note that the following equalities are satisfied:
\begin{align*}
\alpha_0 &= 2e_1 + e_2 + 2e_3 + e_4, &
\alpha_3 &= e_1 + 2e_2 + 2e_3 + 3 e_4, \\
\beta_0 &= 0, &
\beta_3 &= 3 e_3 + 3 e_4, \\
\gamma_0 &= 3 e_1 + 3 e_2 + e_3 + e_5 + e_6, &
\gamma_2 &= 3 e_1 + 3 e_2 + 3 e_3 + 3 e_4 + 3 e_5 + e_6, \\
\gamma_3 &= 2 e_1 + 2 e_2 + e_4 + 3 e_5.
\end{align*}

\section{Explicit determination of the $4$-torsion points}
\label{Section:Mod4}

By Rohrlich's results,
the subgroup $\mathscr{C} \subset \Jac(F_4)[4]$ has index $2$;
see Corollary \ref{Corollary:Rohrlich}.
Hence we need only to find a $4$-torsion point on $\Jac(F_4)$
which does not belong to $\mathscr{C}$.
Currently, there seems no practical algorithm to calculate such a point.
Fortunately, after a number of trials and errors,
the authors found that $e'_6$ does the job.
(See also Appendix \ref{Appendix:Experimental}.)

\begin{prop}
\label{Proposition:KeyDivisor}
The element $e'_6$ satisfies the following equality in $\Jac(F_4)(\overline{\Q})$:
\[ 2 e'_6 = 2e_2 + 2e_4 + e_6. \]
\end{prop}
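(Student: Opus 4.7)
The plan is to translate the identity $2 e'_6 = 2 e_2 + 2 e_4 + e_6$ into a concrete linear equivalence of divisors on $F_4$ and then verify it by constructing an explicit rational function. Unfolding the definitions of $e'_6, e_2, e_4, e_6$, the claim is equivalent to
\[ 2(P_1 + P_2 + P_3) + 4 B_0 \;\sim\; A_1 + 3 A_2 + B_1 + 3 B_2 + C_1 + C_2 \]
as effective degree-$10$ divisors on $F_4$.

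The strategy is to exhibit a rational function $f = G_1/G_2$ on $F_4$ with $G_1, G_2 \in \overline{\Q}[X, Y, Z]$ homogeneous of the same degree realizing this equivalence. Useful atomic divisors come from tangent lines: the tangent to $F_4$ at $B_0$ is $X - Z = 0$ with contact of order four, so $\divi(X - Z)|_{F_4} = 4 B_0$, and similarly each of the other cusps admits a tangent line whose divisor on $F_4$ is $4$ times that cusp (e.g.\ $\divi(Y+Z)|_{F_4} = 4 A_2$, $\divi(X+Z)|_{F_4} = 4 B_2$, and $\divi(Y - \zeta_8 \zeta_4^k X)|_{F_4}$ a suitable $4 C_k$). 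To produce the contributions $2(P_1 + P_2 + P_3)$, natural candidates include conics such as $X^2 + Y^2 - \sqrt{2} Z^2$ (which is tangent to $F_4$ at $P_3$ and at its $\sigma$-orbit) and $X^2 + Y^2 - (-\sqrt{2} + i) Z^2$ (which passes through $P_1$ and $P_2$ together with their Galois conjugates). Products of such forms, tuned so that the residual intersections of $G_1$ and $G_2$ with $F_4$ cancel, should yield the desired $f$.

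Riemann--Roch (with genus $g = 3$) gives $h^0(F_4, D^+) = \deg(D^+) - g + 1 = 8$ for the effective divisor $D^+ = 2(P_1 + P_2 + P_3) + 4 B_0$, so the linear system is large enough for the equivalence to be plausible but far from automatic; pinpointing explicit matching forms $G_1, G_2$ is the central task. Once candidates are found---for instance by computer search using Singular's \texttt{DivisorsLib} library already cited in the paper---the verification reduces to computing the orders of vanishing of $G_1$ and $G_2$ at each point in the support, using local parameters on $F_4$, and checking that the net divisor equals $2(P_1+P_2+P_3)+4B_0 - A_1 - 3A_2 - B_1 - 3B_2 - C_1 - C_2$.

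The main obstacle is locating an explicit rational function. Consistent with the authors' own remark that even identifying $e'_6$ required substantial trial and error, the matching $f$ is most plausibly located by computer search; once it is in hand, the remaining divisor verification is a mechanical pointwise computation.
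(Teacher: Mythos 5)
Your reduction of the statement to a divisor identity is correct and matches the paper's: unwinding the definitions, $2e'_6 = 2e_2 + 2e_4 + e_6$ is equivalent to $2(P_1+P_2+P_3) + 4B_0 - A_1 - 3A_2 - B_1 - 3B_2 - C_1 - C_2 \sim 0$, and the intended method --- exhibit an explicit rational function with exactly this divisor and verify it by computer algebra --- is exactly what the paper does. Your observations about the atomic pieces (the tangent line $X-Z$ cutting out $4B_0$, and likewise for the other cusps) are also correct and are the same facts the paper exploits elsewhere (Appendix B).

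The gap is that you never produce the rational function, and that function \emph{is} the proof. The paper's argument consists of writing down $f = g_2/g_1$, a ratio of two explicit cubic forms with coefficients in $\Q(2^{1/4},\zeta_8)$ (expressed via a primitive element $\delta$ with minimal polynomial $X^8 - 4X^6 + 8X^4 - 4X^2 + 1$), and then checking $\mathrm{div}(f)$ equals the divisor above; everything before that is routine, and everything after is mechanical. Your proposal replaces this single step with the statement that such an $f$ ``should'' exist and is ``most plausibly located by computer search.'' The Riemann--Roch count $h^0\bigl(2(P_1+P_2+P_3)+4B_0\bigr) = 8$ does not help: it holds for any effective divisor of degree $10$ and says nothing about whether the two specific divisors lie in the same class --- indeed, the whole point of the proposition is that this particular coincidence of classes is a nontrivial arithmetic fact (it is what forces $e'_6$ to be a $4$-torsion point outside $\mathscr{C}$). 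Your suggested ansatz (products of tangent lines at the cusps and conics tangent at the $P_i$) is a reasonable heuristic but is not shown to close up; the paper instead uses a single ratio of cubics, each cutting out a degree-$12$ divisor with a common residual part. As written, the proposal is a correct strategy but an incomplete proof: the one piece of genuine content is missing.
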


\begin{proof}
It is enough to show that
\[
2 P_1 + 2 P_2 + 2 P_3
- A_1 - 3 A_2 + 4 B_0 - B_1 - 3 B_2 - C_1 - C_2
\]
is linearly equivalent to $0$.
We shall give a rational function $f$ whose divisor coincides
with the above divisor.
The field $\Q(2^{1/4}, \zeta_8)$
is generated by an element $\delta$ such that its minimal polynomial over $\Q$ is
\[ X^8 - 4X^6 + 8X^4 - 4X^2 + 1, \]
and the following equalities are satisfied:
\begin{align*}
2 \cdot \delta^2 &= (2 - \sqrt{2})(1 + \zeta_4), \\
3 \cdot \zeta_8 &= 2 \delta^6 - 7 \delta^4 + 11 \delta^2 - 1, \\
3 \cdot 2^{1/4} &= \delta^7 - 5 \delta^5 + 10 \delta^3 - 8 \delta.
\end{align*}
We define the elements $c_1,c_2,c_3,c_4,c_5$ by
\begin{align*}
c_1 &:= \delta^6 - 2 \delta^4 + \delta^2 + 7, \\
c_2 &:= 2 \delta^6 - 10 \delta^4 + 20 \delta^2 - 13, \\
c_3 &:= 22 \delta^7 + 9 \delta^6 - 86 \delta^5 - 36 \delta^4 + 166 \delta^3 + 63 \delta^2 - 86 \delta - 24, \\
c_4 &:= 10 \delta^7 - 2 \delta^6 - 26 \delta^5 + 7 \delta^4 + 22 \delta^3 - 20 \delta^2 + 46 \delta - 14, \\
c_5 &:= 22 \delta^6 - 77 \delta^4 + 154 \delta^2 - 44.
\end{align*}
Then, it can be checked (with the aid of computer algebra systems)
that the following rational function $f$ satisfies the required conditions:
\begin{align*}
f &:= \frac{g_2}{g_1}, \\
g_1 &:=  3 (X^{3} + Y^{3} + Z^{3}) + c_1 (X^{2}Y + X^{2}Z + XY^{2} + XYZ + Y^{2}Z - Z^3) \\
    &\quad -c_2(XYZ + XZ^{2} + YZ^{2} + Z^{3}), \\
g_2 &:= 33 (X^3 + XY^2 + XYZ - XZ^2 - Y^2 Z - YZ^2) \\
    &\quad + c_3 (-X^2 Y + XYZ) + c_4 (X^2Z + XYZ - XZ^2 - YZ^2) \\
    &\quad + c_5 (XZ^2 - Z^3).
\end{align*}
(See also Appendix \ref{Appendix:MethodsCalculation}.)
\end{proof}

By Corollary \ref{Corollary:Rohrlich},
the element $2e_2 + 2e_4 + e_6$ is killed by $2$,
but it is not divisible by $2$ inside $\mathscr{C}$.
From these calculations,
it is straightforward to give a basis of $\Jac(F_4)[4]$.

\begin{thm}
\label{MainTheorem1}
The group of $4$-torsion points $\Jac(F_4)[4]$
is a free $\Z/4\Z$-module of rank $6$
with basis $e_1$, $e_2$, $e_3$, $e_4$, $e_5$, $e'_6$.
In other words, the following homomorphism is an isomorphism:
\begin{align*}
  (\Z/4\Z)^{\oplus 6} &\overset{\cong}{\longrightarrow} \Jac(F_4)[4], \\
  (c_1,c_2,c_3,c_4,c_5,c'_6) &\mapsto \sum_{i=1}^{5} c_i e_i + c'_6 e'_6.
\end{align*}
In particular, all of the $4$-torsion points on $\Jac(F_4)$
are defined over $\Q(2^{1/4}, \zeta_8)$.
\end{thm}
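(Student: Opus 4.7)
The plan is to deduce the theorem as a formal consequence of Proposition \ref{Proposition:KeyDivisor} and Corollary \ref{Corollary:Rohrlich}, using the fact that $\mathscr{C}$ has index $2$ in $\Jac(F_4)[4]$ so that only one extra generator is needed. First I would record the counting: since $\Jac(F_4)$ is a three-dimensional abelian variety in characteristic $0$, its $4$-torsion is abstractly isomorphic to $(\Z/4\Z)^{\oplus 6}$, of order $2^{12}$, while Corollary \ref{Corollary:Rohrlich}(3) gives $|\mathscr{C}| = 4^5 \cdot 2 = 2^{11}$, so $[\Jac(F_4)[4] : \mathscr{C}] = 2$. Next I would check that $e'_6 \in \Jac(F_4)[4]$: combining Proposition \ref{Proposition:KeyDivisor} with Corollary \ref{Corollary:Rohrlich}(2),
\[
4 e'_6 \;=\; 2(2e_2 + 2e_4 + e_6) \;=\; 4e_2 + 4e_4 + 2e_6 \;=\; 0.
\]
Then I would show $e'_6 \notin \mathscr{C}$: under the isomorphism of Corollary \ref{Corollary:Rohrlich}(3) the subgroup $2\mathscr{C}$ corresponds to $(2\Z/4\Z)^{\oplus 5} \oplus 0$, but $2e'_6 = 2e_2 + 2e_4 + e_6$ corresponds to the tuple $(0,2,0,2,0,1)$, whose last coordinate is nonzero in $\Z/2\Z$. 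Hence $2e'_6 \notin 2\mathscr{C}$, and \emph{a fortiori} $e'_6 \notin \mathscr{C}$.

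Having produced a $4$-torsion point outside $\mathscr{C}$, the index-$2$ bound forces $\Jac(F_4)[4] = \mathscr{C} + \Z e'_6$. To identify $e_1,\ldots,e_5,e'_6$ as a $\Z/4\Z$-basis, I would verify that the homomorphism $\varphi\colon (\Z/4\Z)^{\oplus 6} \to \Jac(F_4)[4]$ in the statement is surjective. An arbitrary $x$ differs from an element of $\mathscr{C}$ by $\epsilon\, e'_6$ with $\epsilon \in \{0,1\}$; expressing the $\mathscr{C}$-part in Rohrlich's basis $e_1,\ldots,e_5,e_6$ and substituting $e_6 = 2 e'_6 - 2 e_2 - 2 e_4$ (from Proposition \ref{Proposition:KeyDivisor}) rewrites $x$ in the claimed basis. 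Since both sides have order $2^{12}$, surjectivity of $\varphi$ promotes to bijectivity. The final assertion on the field of definition then drops out: $e_1,\ldots,e_5$ come from cusps defined over $\Q(\zeta_8)$, while $e'_6 = [P_1+P_2+P_3-3B_0]$ is defined over $\Q(2^{1/4},\zeta_8)$, so the whole group $\Jac(F_4)[4]$ is defined there.

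The only genuinely delicate step is already behind us in Proposition \ref{Proposition:KeyDivisor}: one must discover the right extra $4$-torsion class $e'_6 = [P_1+P_2+P_3-3B_0]$ and exhibit an explicit rational function witnessing the linear equivalence $2 e'_6 = 2 e_2 + 2 e_4 + e_6$. Given that input, Theorem \ref{MainTheorem1} itself is pure counting and linear algebra over $\Z/4\Z$, with no further geometric work needed.
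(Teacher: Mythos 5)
Your proposal is correct and follows essentially the same route as the paper: the paper likewise deduces Theorem \ref{MainTheorem1} from Proposition \ref{Proposition:KeyDivisor} together with Corollary \ref{Corollary:Rohrlich}, observing that $2e_2+2e_4+e_6$ is killed by $2$ but not divisible by $2$ inside $\mathscr{C}$, so that $e'_6\notin\mathscr{C}$ and the index-$2$ count finishes the argument. You have merely written out the ``straightforward'' linear-algebra details (the order count $2^{12}$ versus $2^{11}$, the substitution $e_6=2e'_6-2e_2-2e_4$, and the surjectivity-implies-bijectivity step) that the paper leaves implicit.
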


\begin{rem}
\label{Remark:ModularCurve}
The element $e'_6$ was found experimentally by the authors;
see Appendix \ref{Appendix:Experimental}.
The authors were unaware of any theoretical meaning of this element.
It would be interesting to look for a ``natural'' or ``moduli theoretic''
proof of Proposition \ref{Proposition:KeyDivisor}.
(Note that the Fermat quartic $F_4$ is isomorphic to
the modular curve $X_0(64)$ over $\Q$;
see \cite[Proposition 2]{Kenku}, \cite[p.454]{MShimura}, \cite[p.107]{TuYang}.)
\end{rem}

\section{Explicit calculation of the Galois action}
\label{Section:GaloisAction}

By Theorem \ref{MainTheorem1},
we see that the action of $\Gal(\overline{\Q}/\Q)$
on $\Jac(F_4)[4]$ factors through $\Gal(\Q(2^{1/4},\zeta_8)/\Q)$.
Hence we have the mod $4$ Galois representation
\[ \rho_4 \colon \Gal(\Q(2^{1/4},\zeta_8)/\Q) \to \Aut(\Jac(F_4)[4]) \cong \GL_6(\Z/4\Z) \]
with respect to the basis $e_1,e_2,e_3,e_4,e_5,e'_6$.

Our task is to calculate the matrices $\rho_4(\sigma), \rho_4(\tau)$ explicitly.

\begin{thm}
\label{MainTheorem2}
With respect to the basis $e_1,e_2,e_3,e_4,e_5,e'_6 \in \Jac(F_4)[4]$,
the actions of $\sigma,\tau$ on $\Jac(F_4)[4]$
are represented by the following matrices:
\begin{align*}
\rho_4(\sigma) &= \begin{pmatrix}
1 & 0 & 0 & 0 & 2 & 1 \\
0 & 1 & 0 & 0 & 2 & 3 \\
0 & 0 & 1 & 0 & 0 & 3 \\
0 & 0 & 0 & 1 & 1 & 1 \\
0 & 0 & 0 & 0 & 3 & 2 \\
0 & 0 & 0 & 0 & 0 & 3
\end{pmatrix}, &
\rho_4(\tau) &= \begin{pmatrix}
1 & 0 & 0 & 0 & 3 & 0 \\
2 & 1 & 0 & 0 & 1 & 3 \\
2 & 0 & 3 & 0 & 3 & 0 \\
3 & 0 & 3 & 1 & 1 & 3 \\
0 & 0 & 0 & 0 & 3 & 0 \\
0 & 0 & 0 & 0 & 2 & 3
\end{pmatrix}.
\end{align*}
In particular, the mod $4$ Galois representation $\rho_4$ is injective,
and the image of $\rho_4$ is isomorphic to the dihedral group of order $8$.
\end{thm}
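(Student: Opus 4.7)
The plan is to compute the matrices $\rho_4(\sigma)$ and $\rho_4(\tau)$ column by column, then read off the algebraic consequences. Columns $1$ through $5$ are immediate from the action on cusps, while column $6$ is the genuine work and is solved by the same device as in Proposition~\ref{Proposition:KeyDivisor}.

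For the first five columns, I would use the definitions of $\sigma$ and $\tau$ from Section~\ref{Section:Notation} to compute how they permute the cusps. From $\sigma(\zeta_8) = -\zeta_8$ and $\sigma(\zeta_4) = \zeta_4$ one finds that $\sigma$ fixes each $A_i$ and $B_i$, while $\sigma(C_i) = C_{i+2}$; similarly $\tau(\zeta_4) = \zeta_4^{-1}$ and $\tau(\zeta_8) = \zeta_8^{-1}$ give $\tau(A_i) = A_{-i}$, $\tau(B_i) = B_{-i}$, and $\tau(C_i) = C_{3-i}$. Applying this to $\alpha_i,\beta_i,\gamma_i$ and rewriting the result via the identities listed at the end of Section~\ref{Section:Rohrlich} yields the first five columns of each matrix as a routine bookkeeping calculation inside $\mathscr{C}$.

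The sixth column is where the real content lies. A first reduction uses that $\mathscr{C}$ is defined over $\Q(\zeta_8)$ (being generated by cusp differences) and that $[\Jac(F_4)[4]:\mathscr{C}] = 2$ by Corollary~\ref{Corollary:Rohrlich} and Theorem~\ref{MainTheorem1}, so the Galois action on the order-two quotient is forced to be trivial; hence $\sigma(e'_6) - e'_6$ and $\tau(e'_6) - e'_6$ both lie in $\mathscr{C}$. Doubling these expressions and using the identity $2e'_6 = 2e_2 + 2e_4 + e_6$ from Proposition~\ref{Proposition:KeyDivisor} together with the action on $e_2, e_4, e_6$ already computed pins down $2\sigma(e'_6)$ and $2\tau(e'_6)$ explicitly in $\mathscr{C}$, narrowing $\sigma(e'_6)$ and $\tau(e'_6)$ to a coset of $\Jac(F_4)[2]$. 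To select the correct element from this coset I would imitate the proof of Proposition~\ref{Proposition:KeyDivisor}: compute the coordinates of $\sigma(P_i)$ and $\tau(P_i)$ over $\Q(2^{1/4},\zeta_8)$ and, with the aid of the computer algebra systems listed in the introduction, search for explicit rational functions on $F_4 \otimes \Q(2^{1/4},\zeta_8)$ whose divisors realize the linear equivalence between $\sigma(P_1) + \sigma(P_2) + \sigma(P_3) - 3B_0$ (respectively the analogous divisor for $\tau$) and a specific integer combination of cusp differences plus a multiple of $P_1 + P_2 + P_3 - 3B_0$. The coefficients produced, reduced modulo~$4$, form the last columns of $\rho_4(\sigma)$ and $\rho_4(\tau)$. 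This computer-algebra search is the main obstacle and is essentially a larger-scale analogue of finding the function $f = g_2/g_1$.

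With both matrices written down, the concluding assertions follow by direct manipulation in $\GL_6(\Z/4\Z)$: one verifies the dihedral relations $\rho_4(\sigma)^4 = \rho_4(\tau)^2 = I$ and $\rho_4(\tau)\rho_4(\sigma)\rho_4(\tau) = \rho_4(\sigma)^3$, that $\rho_4(\sigma)$ has order exactly $4$, and that $\rho_4(\tau) \notin \langle \rho_4(\sigma) \rangle$. Since the domain $\Gal(\Q(2^{1/4},\zeta_8)/\Q)$ itself has order~$8$, these checks force $\rho_4$ to be injective with image the dihedral group of order~$8$, completing the proof.
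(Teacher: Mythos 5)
Your proposal is correct and follows essentially the same route as the paper: compute the permutation of the cusps and the images of $P_1,P_2,P_3$ under $\sigma,\tau$, express the resulting divisor classes in the basis $e_1,\dots,e_5,e'_6$ via Rohrlich's relations and explicit computer-verified linear equivalences, and then read off the group-theoretic conclusions. Your preliminary narrowing of $\sigma(e'_6)$ and $\tau(e'_6)$ to a coset of the $2$-torsion using $2e'_6=2e_2+2e_4+e_6$ is a sensible organizational refinement, but the decisive step is the same computer-algebra determination of linear equivalences that the paper performs.
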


\begin{proof}
It is a straightforward exercise (with the aid of computer algebra systems).
We briefly give a summary of our calculations.

The actions of $\sigma,\tau$
on the cusps $A_i,B_i,C_i$ $(0 \leq i \leq 3)$
are calculated as follows:
\[
\begin{array}{|c||c|c|c|c||c|c|c|c||c|c|c|c|}
\hline
\quad & A_0 & A_1 & A_2 & A_3 & B_0 & B_1 & B_2 & B_3 & C_0 & C_1 & C_2 & C_3 \\
\hline
\sigma & A_0 & A_1 & A_2 & A_3 & B_0 & B_1 & B_2 & B_3 & C_2 & C_3 & C_0 & C_1 \\
\hline
\tau   & A_0 & A_3 & A_2 & A_1 & B_0 & B_3 & B_2 & B_1 & C_3 & C_2 & C_1 & C_0 \\
\hline
\end{array}
\]

The actions of $\sigma,\tau$ on $P_1,P_2,P_3$ are calculated
as follows:
\[
\begin{array}{|c||c|c|c|}
\hline
\quad & P_1 & P_2 & P_3 \\
\hline
\sigma & [ -2^{1/4} : -\zeta_8 : 1 ] & [ -\zeta_8 : -2^{1/4} : 1 ] &
[ -2^{-1/4} \zeta_4 : -2^{-1/4} \zeta_4 : 1 ] \\
\hline
\tau   & [ -2^{1/4} \zeta_4 : \zeta_8^7 : 1 ] & [ \zeta_8^7 : -2^{1/4} \zeta_4 : 1 ] & P_3 \\
\hline
\end{array}
\]

By Theorem \ref{MainTheorem1},
$\Jac(F_4)[4]$ is a free $\Z/4\Z$-module with basis
$e_1$, $e_2$, $e_3$, $e_4$, $e_5$, $e'_6$.
Hence we need to calculate the actions of $\sigma,\tau$ on these elements.
\[
\begin{array}{|c||c|c|c|c|}
\hline
\quad & e_1 & \qquad e_2 \qquad & e_3 & \qquad e_4 \qquad \\
\hline
\sigma & e_1 & e_2 & e_3 & e_4 \\
\hline
\tau &
\begin{array}{c}
[A_3 - B_0] = \\ e_1 + 2 e_2 + 2 e_3 + 3 e_4
\end{array}
& e_2 &
\begin{array}{c}
[B_3 - B_0] = \\ 3 e_3 + 3 e_4
\end{array}
& e_4 \\
\hline
\end{array}
\]

\[
\begin{array}{|c||c|c|}
\hline
\quad & e_5 & e'_6 \\
\hline
\sigma &
\begin{array}{c}
[C_3 - B_0] = \\ 2 e_1 + 2 e_2 + e_4 + 3 e_5
\end{array}
&
e_1 + 3 e_2 + 3 e_3 + e_4 + 2 e_5 + 3 e'_6 \\
\hline
\tau   &
\begin{array}{c}
[C_2 - B_0] = \\
3 e_1 + e_2 + 3 e_3 + e_4 + 3 e_5 + 2e'_6
\end{array} &
3 e_2 + 3 e_4 + 3 e'_6 \\
\hline
\end{array}
\]

From these calculations,
we see that the images of $\sigma,\tau$ under the homomorphism $\rho_4$
are given by the above matrices.
\end{proof}

\begin{rem}
The dihedral extension $\Q(2^{1/4},\zeta_8)/\Q$ appeared in
a recent work of Kiming-Rustom \cite{KimingRustom}.
They constructed a $2$-dimensional mod $4$ Galois representation
which factors through $\Gal(\Q(2^{1/4},\zeta_8)/\Q)$,
and showed that it is associated with a cusp form $f$
of weight $36$ and level $1$.
They also showed that it is realized as the Galois action
on $4$-torsion points on the elliptic curve
$E \colon Y^2 = X^3 + X^2 + X + 1$,
which is a modular elliptic curve of conductor $128$
associated with a cusp form $g$ of weight $2$ and level $128$;
the cusp forms $f,g$ are congruent mod $4$.
(See \cite[Theorem 1]{KimingRustom}.)
Since $F_4 \cong X_0(64)$,
it is interesting to study the relation between
the Galois action on $\Jac(F_4)[4]$ and
the $2$-dimensional mod $4$ Galois representation
constructed by Kiming-Rustom.
\end{rem}

\section{Calculation of the Mordell-Weil group over $\Q(\zeta_8)$}
\label{Section:MordellWeilGroupQZeta8}

In this section, as an application of our results
(Theorem \ref{MainTheorem1}, Theorem \ref{MainTheorem2}),
we calculate the Mordell-Weil group of $\Jac(F_4)$ over $\Q(\zeta_8)$.

We briefly recall the isogeny constructed by Faddeev \cite{Faddeev}.
Let $E_1$ (resp.\ $E_3$)
be the smooth projective curve over $\Q$
birational to the affine curve $Y^2 = 1 - X^4$ (resp.\ $Y^2 = 1 + X^4$).
Then, $E_1$ (resp.\ $E_3$) is isomorphic to the elliptic curve
whose Weierstrass equation is $Y^2 = X^3 + 4X$ (resp.\ $Y^2 = X^3 - 4X$).
For any $\lambda$,
the affine curve $C_{\lambda} \colon Y^2 = 1 - \lambda X^4$ is
birational to the affine curve $C'_{\lambda} \colon Y^2 = X^3 + 4 \lambda X$
via the rational map
\[ C_{\lambda} \dashrightarrow C'_{\lambda},\quad (a,b) \mapsto (u,v) := (2 \lambda a^2/(1-b),\,4 \lambda a/(1-b)); \]
the inverse map is given by
\[ C'_{\lambda} \dashrightarrow C_{\lambda},\quad (u,v) \mapsto (2u/v,\,1 - 8 \lambda u/v^2). \]

We put $E_2 := E_1$.
Then we have morphisms
$f_i \colon F_4 \to E_i$ $(1 \leq i \leq 3)$
of degree $2$ by
\begin{align*}
f_1([X : Y : Z]) &= ((Y/Z),\,(X/Z)^2), \\
f_2([X : Y : Z]) &= ((X/Z),\,(Y/Z)^2), \\
f_3([X : Y : Z]) &= ((X/Y),\,(Z/Y)^2).
\end{align*}
These morphisms induce a homomorphism of abelian varieties
\begin{equation}
\label{Jacobian:Isogeny}
\xymatrix{
\Jac(F_4) \ar^-{}[r] & E_1 \times E_2 \times E_3
\qquad
}
\end{equation}
defined over $\Q$. It is an isogeny of degree $8$.

The following result is presumably well-known.

\begin{lem}
\label{Lemma:MordellWeilGroup1}
The Mordell-Weil group $\Jac(F_4)(\Q(\zeta_8))$ is a finite abelian group
whose order is a power of $2$.
\end{lem}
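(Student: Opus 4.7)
The plan is to use Faddeev's isogeny \eqref{Jacobian:Isogeny}, which is defined over $\Q$ and has degree $8$. Taking $\Q(\zeta_8)$-rational points gives a homomorphism
\[ \Jac(F_4)(\Q(\zeta_8)) \longrightarrow E_1(\Q(\zeta_8)) \times E_2(\Q(\zeta_8)) \times E_3(\Q(\zeta_8)) \]
whose kernel is contained in the kernel of the isogeny on $\overline{\Q}$-points, and is therefore a finite $2$-group of order dividing $8$. Consequently, if each factor $E_i(\Q(\zeta_8))$ is finite of $2$-power order, then $\Jac(F_4)(\Q(\zeta_8))$ is an extension of one finite $2$-group by another, hence finite of $2$-power order. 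It suffices to prove the analogous assertion for each $E_i(\Q(\zeta_8))$.

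For the finiteness, I would exploit that both $E_1 = E_2 \colon Y^2 = X^3 + 4X$ and $E_3 \colon Y^2 = X^3 - 4X$ have complex multiplication by $\Z[\sqrt{-1}]$ and $j$-invariant $1728$. Writing $\Q(\zeta_8) = \Q(\sqrt{-1}, \sqrt{2})$ and using the action of $\Gal(\Q(\zeta_8)/\Q) \cong (\Z/2\Z)^{\oplus 2}$, one decomposes $E_i(\Q(\zeta_8)) \otimes_{\Z} \Q$ by characters, expressing the $\Q(\zeta_8)$-rank of $E_i$ as the sum of the $\Q$-ranks of $E_i$ itself and of its three quadratic twists (by $-1$, $2$, and $-2$). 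All six resulting curves are CM elliptic curves with small $2$-power conductor, and their $\Q$-ranks are known to be zero by a classical $2$-descent or from standard tables.

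The $2$-power statement on the torsion then follows by reduction modulo primes of good reduction: each $E_i$ has good reduction outside $2$, so for any prime $\mathfrak{p}$ of $\Q(\zeta_8)$ above an odd rational prime $p$, the prime-to-$p$ part of the torsion subgroup of $E_i(\Q(\zeta_8))$ injects into the finite group $E_i(\F_{\mathfrak{p}})$. Directly computing $|E_i(\F_{\mathfrak{p}})|$ for primes $\mathfrak{p}$ above, say, $3$ and $5$ then forces the prime-to-$2$ part of the torsion to be trivial.

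The main obstacle is the rank zero claim over $\Q(\zeta_8)$. Via the Galois decomposition above, this reduces to a finite collection of $\Q$-rank computations on CM elliptic curves with small $2$-power conductor; each is a routine $2$-descent, but collectively they form the only non-formal input of the argument. Everything else in the proof is either elementary group theory or a straightforward finite-field calculation.
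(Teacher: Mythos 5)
Your argument is correct and is essentially the paper's: both reduce via Faddeev's degree-$8$ isogeny to the elliptic curves $E_i$, and then reduce the computation over $\Q(\zeta_8)$ to rank-zero and $2$-power-torsion statements over $\Q$ for $Y^2 = X^3 \pm 4X$ and their quadratic twists by $2$. The paper packages the descent to $\Q$ as an isogeny decomposition of the Weil restriction $\mathrm{Res}_{\Q(\zeta_8)/\Q}(E_1)$ (after first noting $E_1 \cong E_2 \cong E_3$ over $\Q(\zeta_8)$), which is the same content as your character decomposition plus reduction-mod-$\mathfrak{p}$ control of torsion.
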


\begin{proof}
Since the degree of the isogeny (\ref{Jacobian:Isogeny}) is a power of $2$,
it is enough to show the same assertion for each
$E_i$ ($1 \leq i \leq 3$).
Over $\Q(\zeta_8)$, the elliptic curves $E_1,E_2,E_3$ are isomorphic to each other.
Hence it is enough to show the assertion for $E_1$.
It should be possible to calculate $E_1(\Q(\zeta_8))$ directly.
But the calculation of the Mordell-Weil group is not an easy problem over a number field of large degree.

Here is a simple proof of this lemma avoiding the calculation of the Mordell-Weil group
over a number field other than $\Q$.
The Mordell-Weil group of $E_1$ over $\Q(\zeta_8)$
is identified with the Mordell-Weil group of the Weil restriction
$\mathrm{Res}_{\Q(\zeta_8)/\Q}(E_1)$,
which is an abelian variety of dimension $4$ over $\Q$.
Since $\Q(\zeta_8)$ is a biquadratic field equal to
the composite of $\Q(\sqrt{-1})$ and $\Q(\sqrt{2})$,
there is an isogeny of $2$-power degree from
$\mathrm{Res}_{\Q(\zeta_8)/\Q}(E_1)$
to the product of the following $4$ elliptic curves over $\Q$:
\begin{align*}
 \pm Y^2 &= X^3 + 4 X, &
 \pm 2Y^2 &= X^3 + 4 X.
\end{align*}
It is an easy exercise to check that the Mordell-Weil groups of
these elliptic curves over $\Q$ are finite abelian groups of $2$-power order.
\end{proof}

Let $\F_3$ (resp.\ $\F_9$) be the finite field with $3$ (resp.\ $9$) elements.

\begin{lem}
\label{Lemma:MordellWeilGroup2}
Let $\widetilde{F}_4$ be the smooth quartic over $\F_3$ defined by $X^4 + Y^4 = Z^4$.
Let $\Jac(\widetilde{F}_4)$ be the Jacobian variety of $\widetilde{F}_4$.
\begin{enumerate}
\item For any prime number $\ell \neq 3$,
every eigenvalue of the Frobenius morphism over $\F_3$
on the $\ell$-adic Tate module
\[ V_{\ell} \Jac(\widetilde{F}_4) :=
   \big( \varprojlim_{n} \Jac(\widetilde{F}_4)[\ell^n] \big) \otimes_{\Z_{\ell}} \Q_{\ell}.
\]
is a square root of $-3$.

\item
For any prime number $\ell \neq 3$,
the Frobenius morphism over $\F_9$ acts on
$V_{\ell} \Jac(\widetilde{F}_4)$
via the scalar multiplication by $-3$.

\item
$\Jac(\widetilde{F}_4)(\F_9)$ is isomorphic to $(\Z/4\Z)^{\oplus 6}$.
\end{enumerate}
\end{lem}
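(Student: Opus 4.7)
The plan is to derive all three parts from a direct point count on $\widetilde{F}_4$ over $\F_3$ and $\F_9$, combined with elementary manipulation of the Frobenius eigenvalues. Let $\alpha_1,\ldots,\alpha_6$ be the eigenvalues of $\Frob_3$ acting on $V_\ell \Jac(\widetilde{F}_4)$; since $\widetilde{F}_4$ has genus $3$, the Lefschetz trace formula gives
\[
\sum_{i=1}^{6} \alpha_i^n \;=\; 3^n + 1 - \#\widetilde{F}_4(\F_{3^n}).
\]

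First I would carry out the two point counts. Over $\F_3$, Fermat's little theorem gives $x^4 = 1$ for every $x \in \F_3^{\times}$, so $X^4+Y^4=Z^4$ has no solution in $\PP^2(\F_3)$ with all coordinates nonzero; a quick case analysis of the solutions with a single vanishing coordinate yields $\#\widetilde{F}_4(\F_3) = 4$, hence $\sum_i \alpha_i = 0$. Over $\F_9$ the map $x \mapsto x^4$ sends $\F_9^{\times}$ onto $\{\pm 1\}$ (the four squares to $+1$, the four non-squares to $-1$); sorting solutions by the number of vanishing coordinates, and using that $(-1)+(-1) = 1$ in characteristic $3$, gives $\#\widetilde{F}_4(\F_9) = 28$, hence $\sum_i \alpha_i^2 = -18$.

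Now the Weil bound $|\alpha_i| = \sqrt{3}$ forces $|\alpha_i^2| = 3$, and six complex numbers of modulus $3$ can sum to $-18$ only if they all equal $-3$ (the equality case of the triangle inequality). Thus $\alpha_i^2 = -3$ for every $i$, which is part (1); part (2) follows at once since $\Frob_9 = \Frob_3^2$ then has every eigenvalue equal to $-3$. For (3), the order formula gives $\#\Jac(\widetilde{F}_4)(\F_9) = \prod_i (1 - \alpha_i^2) = 4^6$. Since $\Frob_9$ acts on $\Jac(\widetilde{F}_4)[4]$ as multiplication by $-3 \equiv 1 \pmod{4}$, the entire $4$-torsion subgroup is $\F_9$-rational, and comparing orders with $\#\Jac(\widetilde{F}_4)[4] = 4^6$ gives $\Jac(\widetilde{F}_4)(\F_9) = \Jac(\widetilde{F}_4)[4] \cong (\Z/4\Z)^{\oplus 6}$.

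The only real work is the $\F_9$ point count, which is a tedious but routine case analysis. Once both traces are in hand, the triangle-inequality step pins down each $\alpha_i^2$ on the nose, and the comparison of orders in part (3) follows essentially for free.
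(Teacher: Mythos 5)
Your argument is correct in outline but takes a genuinely different route from the paper. The paper never counts points on the curve itself: it observes that Faddeev's degree-$8$ isogeny $\Jac(F_4) \to E_1 \times E_2 \times E_3$ is already defined over $\F_3$, counts the four $\F_3$-points on each elliptic curve $Y^2 = X^3 \pm 4X$ to get trace zero there, and transports the eigenvalue statement across the isogeny. Your approach instead works directly with the zeta function of $\widetilde{F}_4$: the $\F_9$-point count $\#\widetilde{F}_4(\F_9) = 28$ gives $\sum_i \alpha_i^2 = -18$, and the equality case of the triangle inequality combined with $|\alpha_i^2| = 3$ forces $\alpha_i^2 = -3$ for all $i$. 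This is a nice self-contained argument (it does not presuppose the isogeny of Section 6, and your point counts check out: $4$ points over $\F_3$ and $16 + 12 = 28$ over $\F_9$), at the cost of the somewhat longer $\F_9$ case analysis; the paper's route gets the same conclusion from the trivial count of $E(\F_3)$.

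One step is glossed over in a way that matters. Knowing that every eigenvalue of $\Frob_9 = \Frob_3^2$ equals $-3$ does \emph{not} by itself imply that $\Frob_9$ acts on $V_\ell \Jac(\widetilde{F}_4)$ as the scalar $-3$: you must also invoke the semisimplicity of the Frobenius action on the Tate module of an abelian variety over a finite field (a theorem of Weil--Tate, which the paper cites explicitly at this exact point). This is not cosmetic for part (3): the order formula $\#\Jac(\widetilde{F}_4)(\F_9) = \prod_i(1 - \alpha_i^2) = 4^6$ holds regardless, but a group of order $2^{12}$ with $2$-rank at most $6$ need not be $(\Z/4\Z)^{\oplus 6}$ (e.g.\ $(\Z/2\Z)^{\oplus 2} \oplus (\Z/4\Z)^{\oplus 2} \oplus (\Z/8\Z)^{\oplus 2}$ has the right order). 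It is precisely the scalar action of $\Frob_9$ by $-3 \equiv 1 \pmod 4$ that shows $\Jac(\widetilde{F}_4)(\F_9)$ contains all of $\Jac(\widetilde{F}_4)[4]$ and is killed by $4$. Add the appeal to semisimplicity and your proof is complete.
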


\begin{proof}
(1) \ $\Jac(\widetilde{F}_4)$ is isogenous to
the product of three elliptic curves over $\F_3$
because the isogeny (\ref{Jacobian:Isogeny}) is also defined over $\F_3$.
Each of them is isomorphic to either
$Y^2 = X^3 + 4 X$ or $Y^2 = X^3 - 4 X$.
The number of $\F_3$-rational points (including the point at infinity)
of each of these elliptic curves is equal to $4$.
Hence every eigenvalue of the Frobenius morphism over $\F_3$
on the $\ell$-adic Tate module of these elliptic curves
is a square root of $-3$.
Since the Frobenius eigenvalues are invariant under isogeny,
the same assertion holds for $V_{\ell} \Jac(\widetilde{F}_4)$.

(2) \ This assertion follows from (1) and the semisimplicity of
the action of the Frobenius morphism on the $\ell$-adic Tate module.

(3) \ By (2), the order of $\Jac(\widetilde{F}_4)(\F_9)$
is equal to $(1 - (-3))^6 = 4096$;
see \cite[Chapter IV, Section 21, Theorem 4]{Mumford:AbelianVarieties}.
For any $\ell \neq 3$,
a torsion point $x \in \Jac(\widetilde{F}_4)[\ell^{\infty}]$
of $\ell$-power order is defined over $\F_9$
if and only if $[-3]x = x$.
Hence we have
$\Jac(\widetilde{F}_4)(\F_9) = \Jac(\widetilde{F}_4)[4]$.
It is isomorphic to $(\Z/4\Z)^{\oplus 6}$.
\end{proof}

\begin{rem}
For an abelian variety $A$ of dimension $g$ over
the finite field $\F_q$ with $q$ elements,
the Riemann Hypothesis (also called  the Weil conjecture, proved by Weil himself)
for $A$ implies the number of $\F_q$-rational points on $A$
is less than or equal to $(1 + q^{1/2})^{2g}$;
see \cite[Chapter IV, Section 21, Theorem 4]{Mumford:AbelianVarieties}.
Lemma \ref{Lemma:MordellWeilGroup2} (3)
shows that the maximal number of elements is realized by
$\Jac(\widetilde{F}_4)$ over $\F_9$.
\end{rem}

We shall show the Mordell-Weil group $\Jac(F_4)(\Q(\zeta_8))$ is killed by $4$.
This result is presumably well-known.
One possible approach is to use the isogeny (\ref{Jacobian:Isogeny}).
But it is a non-trivial task to eliminate the possibility that
$\Jac(F_4)(\Q(\zeta_8))$ might have a non-trivial $8$-torsion point.
Instead of using (\ref{Jacobian:Isogeny}), in the following,
we shall give a proof using the reduction modulo $3$ of $\Jac(F_4)$.

\begin{prop}
\label{Proposition:KilledBy4}
$\Jac(F_4)(\Q(\zeta_8))$ is killed by $4$.
\end{prop}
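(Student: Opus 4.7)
The plan is to combine Lemma~\ref{Lemma:MordellWeilGroup1} with Lemma~\ref{Lemma:MordellWeilGroup2} via reduction modulo a prime of $\Q(\zeta_8)$ above $3$. By Lemma~\ref{Lemma:MordellWeilGroup1} the group $\Jac(F_4)(\Q(\zeta_8))$ is already known to be a finite $2$-group, so it suffices to produce an injection into a finite group killed by $4$.

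First I would pin down the decomposition of $3$ in $\Q(\zeta_8)$. Since the Frobenius at $3$ inside $\Gal(\Q(\zeta_8)/\Q)\cong (\Z/8\Z)^{\times}$ sends $\zeta_8\mapsto \zeta_8^{3}$ and $3$ has order $2$ in $(\Z/8\Z)^{\times}$, the decomposition subgroup at $3$ has order $2$. Hence $3$ is unramified, with two primes above it, each having residue field $\F_9$. Next, because $F_4$ is defined by $X^4+Y^4=Z^4$ and $3\nmid 4$, the curve $F_4$ has good reduction at $3$ and its reduction is exactly $\widetilde{F}_4$. Fixing a prime $\mathfrak{p}\mid 3$ of $\Q(\zeta_8)$, the standard fact that reduction modulo a prime of good reduction is injective on the prime-to-residue-characteristic torsion of an abelian variety gives an injection
\[
\Jac(F_4)(\Q(\zeta_8))[2^{\infty}] \hookrightarrow \Jac(\widetilde{F}_4)(\F_9).
\]
Combined with Lemma~\ref{Lemma:MordellWeilGroup1}, which identifies the left-hand side with all of $\Jac(F_4)(\Q(\zeta_8))$, and Lemma~\ref{Lemma:MordellWeilGroup2}\,(3), which identifies the right-hand side with $(\Z/4\Z)^{\oplus 6}$, the conclusion that $\Jac(F_4)(\Q(\zeta_8))$ is killed by $4$ follows immediately.

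I do not expect a real obstacle here: all the substantive work has been pushed into Lemma~\ref{Lemma:MordellWeilGroup2}\,(3), which already contains the delicate point-count over $\F_9$. The remaining steps—the residue-field computation at $3$, the verification of good reduction, and the injectivity of reduction on $2$-primary torsion—are standard, and the attraction of this route (as emphasized by the paper) is precisely that it avoids any direct analysis of $8$-torsion classes through the Faddeev isogeny~(\ref{Jacobian:Isogeny}).
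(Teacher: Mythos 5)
Your proposal is correct and follows essentially the same route as the paper: reduce modulo a place of $\Q(\zeta_8)$ above $3$ with residue field $\F_9$, use Lemma \ref{Lemma:MordellWeilGroup1} to see the Mordell--Weil group is $2$-power torsion so that reduction is injective, and conclude from Lemma \ref{Lemma:MordellWeilGroup2}\,(3) that it embeds in $(\Z/4\Z)^{\oplus 6}$. The only difference is that you spell out the splitting of $3$ in $\Q(\zeta_8)$, which the paper takes for granted.
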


\begin{proof}
(This proof was inspired by
a previous work of the second author \cite{Ito:ManinMumford}.)
Let $v$ be a finite place of $\Q(\zeta_8)$ above $3$.
The residue field $\kappa(v)$ at $v$ is $\F_9$.
The Fermat quartic $F_4$ naturally extends to
a smooth projective curve over the ring of integers of
the completion of $\Q(\zeta_8)$ at $v$.
Its special fiber is the quartic $\widetilde{F}_4$ in
Lemma \ref{Lemma:MordellWeilGroup2}.
By Lemma \ref{Lemma:MordellWeilGroup1},
every element of $\Jac(F_4)(\Q(\zeta_8))$ is of finite order.
Its order is a power of $2$. (In particular, its order is prime to $3$.)
Hence the reduction modulo $v$ homomorphism
\[
\xymatrix{
\Jac(F_4)(\Q(\zeta_8)) \ar^-{}[r] & \Jac(F_{4,v})(\F_9)
}
\]
is injective.
The group $\Jac(F_4)(\Q(\zeta_8))$ is killed by $4$
by Lemma \ref{Lemma:MordellWeilGroup2} (3).
\end{proof}

By Proposition \ref{Proposition:KilledBy4},
we have only to calculate the subgroup of $\Jac(F_4)[4]$
fixed by $\Gal(\Q(2^{1/4},\zeta_8)/\Q(\zeta_8))$.
By Theorem \ref{MainTheorem2},
it is solved by a standard method in linear algebra.

\begin{thm}
\label{MainTheorem3}
The Mordell-Weil group $\Jac(F_4)(\Q(\zeta_8))$ coincides
with the group $\mathscr{C}$ generated by divisors supported on the cusps.
In particular, the following homomorphism is an isomorphism:
\begin{align*}
  (\Z/4\Z)^{\oplus 5} \oplus \Z/2\Z &\overset{\cong}{\longrightarrow} \Jac(F_4)(\Q(\zeta_8)), \\
  (c_1,c_2,c_3,c_4,c_5,c_6) &\mapsto \sum_{i=1}^{5} c_i e_i + c_6 e_6.
\end{align*}
\end{thm}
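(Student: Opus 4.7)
The plan is to bootstrap the heavy lifting already done into a short linear-algebra argument. By Proposition \ref{Proposition:KilledBy4}, every element of $\Jac(F_4)(\Q(\zeta_8))$ is killed by $4$, so $\Jac(F_4)(\Q(\zeta_8)) \subseteq \Jac(F_4)[4]$. In the other direction, the twelve cusps $A_i, B_i, C_i$ are all defined over $\Q(\zeta_8)$, hence the classes $\alpha_i, \beta_i, \gamma_i$ that generate $\mathscr{C}$ lie in $\Jac(F_4)(\Q(\zeta_8))$. This gives the chain
\[ \mathscr{C} \subseteq \Jac(F_4)(\Q(\zeta_8)) \subseteq \Jac(F_4)[4]. \]
Combining Theorem \ref{MainTheorem1} with Corollary \ref{Corollary:Rohrlich} shows that $\mathscr{C}$ has index exactly $2$ in $\Jac(F_4)[4]$, so there are only two candidates for $\Jac(F_4)(\Q(\zeta_8))$.

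Next, I would pin down the relevant Galois subgroup. Since $[\Q(2^{1/4},\zeta_8) : \Q(\zeta_8)] = 2$, the group $\Gal(\Q(2^{1/4},\zeta_8)/\Q(\zeta_8))$ is the unique order-$2$ subgroup of $\Gal(\Q(2^{1/4},\zeta_8)/\Q)$ whose elements fix $\zeta_8$. A direct check from the defining relations of $\sigma$ and $\tau$ shows that $\sigma^2$ fixes $\zeta_8$ while sending $2^{1/4}$ to $-2^{1/4}$, so $\Gal(\Q(2^{1/4},\zeta_8)/\Q(\zeta_8)) = \langle \sigma^2 \rangle$. Because $\rho_4$ factors through $\Gal(\Q(2^{1/4},\zeta_8)/\Q)$ by Theorem \ref{MainTheorem1}, we obtain the identification
\[ \Jac(F_4)(\Q(\zeta_8)) = \Jac(F_4)[4]^{\sigma^2}. \]

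To finish, it suffices to rule out the possibility $\Jac(F_4)(\Q(\zeta_8)) = \Jac(F_4)[4]$, which amounts to verifying $\rho_4(\sigma)^2 \neq I$ in $\GL_6(\Z/4\Z)$. This is immediate from the explicit matrix in Theorem \ref{MainTheorem2}: a single off-diagonal entry of $\rho_4(\sigma)^2$, say the $(4,6)$-entry, evaluates to $1\cdot 1 + 1\cdot 2 + 1\cdot 3 \equiv 2 \pmod 4$, hence is nonzero. Therefore $\Jac(F_4)[4]^{\sigma^2}$ is a proper subgroup of $\Jac(F_4)[4]$ and, by the index-$2$ dichotomy, must equal $\mathscr{C}$. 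The claimed isomorphism $(\Z/4\Z)^{\oplus 5} \oplus \Z/2\Z \cong \Jac(F_4)(\Q(\zeta_8))$ together with the formula in terms of $e_1,\ldots,e_5,e_6$ is then simply Corollary \ref{Corollary:Rohrlich}(3). There is no genuine obstacle here: once Proposition \ref{Proposition:KilledBy4} and Theorem \ref{MainTheorem2} are granted, the proof is a one-entry matrix computation combined with an index comparison.
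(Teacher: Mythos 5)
Your proposal is correct and follows essentially the same route as the paper: reduce to $\Jac(F_4)[4]$ via Proposition \ref{Proposition:KilledBy4}, identify $\Jac(F_4)(\Q(\zeta_8))$ with the fixed points of $\rho_4(\sigma^2)$, and invoke Corollary \ref{Corollary:Rohrlich} for the group structure. The only (welcome) difference is at the final step: where the paper computes the full matrix $\rho_4(\sigma)^2$ and reads off its fixed subgroup, you sandwich $\mathscr{C} \subseteq \Jac(F_4)(\Q(\zeta_8)) \subseteq \Jac(F_4)[4]$ and use the index-$2$ dichotomy, so that checking the single entry $(\rho_4(\sigma)^2)_{46} = 2 \neq 0$ suffices.
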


\begin{proof}
By Proposition \ref{Proposition:KilledBy4},
we have $\Jac(F_4)(\Q(\zeta_8)) \subset \Jac(F_4)[4]$.
Since we know the Galois action on $\Jac(F_4)[4]$,
the calculation of $\Jac(F_4)(\Q(\zeta_8))$ is easy.
Since $\Gal(\Q(2^{1/4},\zeta_8)/\Q(\zeta_8))$ is generated by $\sigma^2$,
we have
\begin{align*}
\Jac(F_4)(\Q(\zeta_8))
&\cong \{\, v \in (\Z/4\Z)^{\oplus 6} \mid \rho_4(\sigma)^2 v = v \, \}.
\end{align*}
By Theorem \ref{MainTheorem2}, the matrix $\rho_4(\sigma)^2$ is
calculated as
\[
\rho_4(\sigma)^2 =
\begin{pmatrix}
1 & 0 & 0 & 0 & 0 & 0 \\
0 & 1 & 0 & 0 & 0 & 0 \\
0 & 0 & 1 & 0 & 0 & 0 \\
0 & 0 & 0 & 1 & 0 & 2 \\
0 & 0 & 0 & 0 & 1 & 0 \\
0 & 0 & 0 & 0 & 0 & 1
\end{pmatrix}.
\]
Hence
$\Jac(F_4)(\Q(\zeta_8))$ is generated by $e_1$, $e_2$, $e_3$, $e_4$, $e_5$, $e_6$.
It coincides with the group $\mathscr{C}$;
see Corollary \ref{Corollary:Rohrlich}.
\end{proof}

\section{Points over quadratic extensions of $\Q(\zeta_8)$}
\label{Section:QuadraticPoints}

In this section,
we shall determine all of the points on the Fermat quartic $F_4$
defined over quadratic extensions of $\Q(\zeta_8)$.

We shall use the following notation:
a pair of $\overline{\Q}$-rational points $P,Q \in C(\overline{\Q})$
on a curve $C$ over a number field $K$
is called a \textit{conjugate pair over $K$}
if none of $P,Q$ is defined over $K$,
and there exists a quadratic extension $L/K$
such that both of $P,Q$ are defined over $L$
and $P,Q$ are interchanged by the action of $\Gal(L/K)$.
If a pair $P,Q$ is a conjugate pair over $K$,
then the divisor $P+Q$ is defined over $K$.

The following result is essentially due to Faddeev.

\begin{lem}[Faddeev {\cite[p.1150, Section 3]{Faddeev}}]
\label{Lemma:QuadraticPoints}
Let $K$ be a number field.
Let $C \subset \PP^2$ be a smooth plane quartic defined over $K$.
Assume that the following conditions are satisfied:
\begin{enumerate}
\item $C$ has at least $N_1$ points $P_1,\ldots,P_{N_1}$ defined over $K$,
\item $C$ has at least $N_2$ conjugate pairs
$Q_1,\overline{Q}_1,\ldots,Q_{N_2},\overline{Q}_{N_2}$ over $K$
(i.e.\ for each $1 \leq i \leq N_2$,
the pair $Q_i,\overline{Q}_i$ is a conjugate pair over $K$), and
\item the number of linear equivalence classes of effective divisors of
degree $2$ on $C$ defined over $K$ is equal to $M$.
\end{enumerate}
Then the inequality
$N_1(N_1 + 1)/2 + N_2 \leq M$
is satisfied.
Moreover, if the equality
$N_1(N_1 + 1)/2 + N_2 = M$
is satisfied, then the points
$P_1,\ldots,P_{N_1}$, $Q_1$, $\overline{Q}_1$, $\ldots$, $Q_{N_2}$, $\overline{Q}_{N_2}$
are the only points on $C$ defined over quadratic extensions of $K$.
\end{lem}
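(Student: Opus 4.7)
The plan is to exploit the fact that a smooth plane quartic $C$ is a canonically embedded non-hyperelliptic curve of genus $3$, so every effective divisor of degree $2$ on $C$ is the unique effective divisor in its linear equivalence class. This rigidity reduces the lemma to a straightforward counting argument combined with Galois descent.

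First I would establish this uniqueness. Since $C \subset \PP^2$ is a smooth plane quartic, $C$ has genus $3$, and the hyperplane class realizes its canonical embedding, so $C$ is non-hyperelliptic. For any effective divisor $D$ of degree $2$ on $C$, Riemann--Roch yields $\ell(D) - \ell(K_C - D) = 0$, and if $\ell(D) \geq 2$ then the complete linear system $|D|$ would furnish a $g^1_2$, forcing $C$ to be hyperelliptic. Hence $\ell(D) = 1$, so for two effective divisors $D_1, D_2$ of degree $2$ the relation $D_1 \sim D_2$ forces $D_1 = D_2$.

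Next, to prove the inequality, I would consider the effective degree-$2$ divisors $P_i + P_j$ for $1 \leq i \leq j \leq N_1$ together with $Q_k + \overline{Q}_k$ for $1 \leq k \leq N_2$. Each $P_i + P_j$ is defined over $K$ because its support lies in $C(K)$, and each $Q_k + \overline{Q}_k$ is defined over $K$ because $\Gal(\overline{K}/K)$ permutes the conjugate pair. These $N_1(N_1+1)/2 + N_2$ divisors are pairwise distinct as divisors: those of the first form have support contained in $C(K)$ whereas those of the second do not, and distinct conjugate pairs yield distinct divisors. By the uniqueness step they represent pairwise distinct linear equivalence classes of effective divisors of degree $2$ defined over $K$, so $N_1(N_1+1)/2 + N_2 \leq M$.

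For the equality case, assume $N_1(N_1+1)/2 + N_2 = M$ and let $R \in C(\overline{K})$ be a point defined over a quadratic extension $L/K$ but not over $K$, with Galois conjugate $\overline{R}$. Then $D := R + \overline{R}$ is an effective divisor of degree $2$ defined over $K$, so its class coincides with one of the $M$ classes already exhibited. By the uniqueness step, $D$ equals either some $P_i + P_j$ or some $Q_k + \overline{Q}_k$ as divisors. The first case would force $R \in \{P_i, P_j\} \subset C(K)$, contradicting the assumption, so $R \in \{Q_k, \overline{Q}_k\}$ for some $k$. The main obstacle is the initial uniqueness statement; once this non-hyperelliptic rigidity is in place, everything else is combinatorics and Galois descent.
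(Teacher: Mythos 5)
Your proof is correct and follows essentially the same route as the paper's: non-hyperellipticity forces each effective degree-$2$ divisor to be the unique representative of its linear equivalence class (you supply the Riemann--Roch justification the paper leaves implicit), and the count together with the equality case does the rest. The only point you leave implicit is ruling out extra $K$-rational points in the equality case (your $R$ is assumed not defined over $K$), but applying the same argument to $2R$ for a putative new $K$-point $R$ disposes of that immediately.
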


\begin{proof}
We shall observe that
if $D = P + Q$ is an effective divisor of degree $2$ defined over $K$,
then either both of $P,Q$ are $K$-rational points,
or $P,Q$ is a conjugate pair over $K$.
In the setting of this lemma, $C$ has the following effective divisors
of degree $2$ defined over $K$:
\begin{align*}
P_i + P_j \quad (1 \leq i \leq j \leq N_1), & &
Q_k + \overline{Q}_k \quad (1 \leq k \leq N_2).
\end{align*}
In total, we have $N_1 (N_1 + 1)/2 + N_2$ effective divisors of degree $2$
defined over $K$.
Since $C$ is non-hyperelliptic, any two of them are not linearly equivalent
to each other.
Hence the inequality $N_1 (N_1 + 1)/2 + N_2 \leq M$ is satisfied.

Moreover, if the equality $N_1 (N_1 + 1)/2 + N_2 = M$ is satisfied,
the above divisors represent all of the linear equivalence classes of
effective divisors of degree $2$ on $C$ defined over $K$.
Therefore, there does not exist any $K$-rational point
other than $P_1,\ldots,P_{N_1}$,
and there does not exist any conjugate pair over $K$
other than $Q_1,\overline{Q}_1,\ldots,Q_{N_2},\overline{Q}_{N_2}$.
\end{proof}

Here are some points on $F_4$
defined over quadratic extensions of $\Q(\zeta_8)$.

\begin{enumerate}
\item There are $12$ cusps $A_i, B_i, C_i$ $(0 \leq i \leq 3)$.
All of them are defined over $\Q(\zeta_8)$.

\item For any $i,j$ with $0 \leq i,j \leq 3$,
the following points
\begin{align*}
[ 2^{1/4} \zeta^i_4 : \zeta^{1+2j}_8 : 1 ], & &
[ \zeta^{1+2j}_8 : 2^{1/4} \zeta^i_4 : 1 ], & &
[ 2^{-1/4} \zeta^i_4 : 2^{-1/4} \zeta^j_4 : 1 ]
\end{align*}
are defined over $\Q(2^{1/4},\zeta_8)$.
In total, we have $48$ points defined over $\Q(2^{1/4},\zeta_8)$.
Since none of them is defined over $\Q(\zeta_8)$,
we have $24$ conjugate pairs over $\Q(\zeta_8)$.

\item For any $i,j$ with $0 \leq i,j \leq 3$,
the following points
\begin{align*}
[ \zeta_3 \zeta^{1+2i}_8 : \zeta^2_3 \zeta_8^{1+2j} : 1 ], & &
[ \zeta^2_3 \zeta^{1+2i}_8 : \zeta_3 \zeta_8^{1+2j} : 1 ]
\end{align*}
are defined over $\Q(\zeta_3, \zeta_8)$.
In total, we have $32$ points defined over $\Q(\zeta_3, \zeta_8)$.
Since none of them is defined over $\Q(\zeta_8)$,
we have $16$ conjugate pairs over $\Q(\zeta_8)$.

\item We put
$\alpha := (1+\sqrt{-7})/2$ and $\overline{\alpha} := (1-\sqrt{-7})/2$.
For any $i,j$ with $0 \leq i,j \leq 3$, the following points
\begin{align*}
[ \alpha \zeta^i_4 : \overline{\alpha} \zeta^j_4 : 1 ], & &
[ \overline{\alpha} \zeta^i_4 : \alpha \zeta^j_4 : 1 ], \\
[ \overline{\alpha} \zeta^{7+2j}_8 : \zeta_4^3 : \alpha \zeta^i_4 ], & &
[ \alpha \zeta^{7+2j}_8 : \zeta_4^3 : \overline{\alpha} \zeta^i_4 ], \\
[ 1 : \alpha \zeta^{1+2i}_8 : \overline{\alpha} \zeta^{2+2j}_8 ], & &
[ 1 : \overline{\alpha} \zeta^{1+2i}_8 : \alpha \zeta^{2+2j}_8 ]
\end{align*}
are defined over $\Q(\sqrt{-7}, \zeta_8)$.
Note that the points in the second (resp.\ the third) row are
obtained from the points in the first row
by applying the automorphism $\theta_3$ (resp.\ $\theta^2_3$);
see Appendix \ref{Appendix:Automorphisms}.
In total, we have $96$ points defined over $\Q(\sqrt{-7}, \zeta_8)$.
Since none of them is defined over $\Q(\zeta_8)$,
we have $48$ conjugate pairs over $\Q(\zeta_8)$.
\end{enumerate}

\begin{rem}
The 4 points 
$[ \pm \alpha : \pm \overline{\alpha} : 1 ]$,
$[ \pm \overline{\alpha} : \pm \alpha : 1 ]$
are defined over $\Q(\sqrt{-7})$.
These points were given by Faddeev in  \cite[p.1150, Section 3]{Faddeev}.
Note that these points were already found by Aigner in 1934 \cite{Aigner}.
(See also Mordell's 1968 paper \cite{Mordell}.)
\end{rem}

\begin{thm}
\label{MainTheorem4}
The above points on the Fermat quartic $F_4$
are the only points defined over
quadratic extensions of $\Q(\zeta_8)$.
In particular, the number of points on $F_4$
defined over quadratic extensions of $\Q(\zeta_8)$ is
\[ 12 + 48 + 32 + 96 = 188. \]
\end{thm}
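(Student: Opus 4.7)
The plan is to invoke Lemma~\ref{Lemma:QuadraticPoints} over $K=\Q(\zeta_8)$, with $N_1 = 12$ accounting for the cusps and $N_2 = 24+16+48 = 88$ accounting for the conjugate pairs assembled from the $48+32+96 = 176$ non-cuspidal points listed just before the theorem. First I would verify the bookkeeping: each listed point satisfies $X^4+Y^4=Z^4$, is defined over the claimed quadratic extension of $\Q(\zeta_8)$, and the nontrivial element of the corresponding relative Galois group acts on each parametrized family of points without fixed points, so the points organize into the stated number of conjugate pairs and none descends to $\Q(\zeta_8)$. With these values, $N_1(N_1+1)/2 + N_2 = 78 + 88 = 166$, and the ``moreover'' part of Lemma~\ref{Lemma:QuadraticPoints} will finish the proof as soon as one establishes that the number $M$ of linear equivalence classes of effective $\Q(\zeta_8)$-rational degree-$2$ divisors on $F_4$ is exactly $166$.

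The main task is thus the computation of $M$. Because $F_4$ is non-hyperelliptic of genus $3$, no linear equivalence class of degree $2$ contains two distinct effective divisors (such a pair would exhibit a $g^1_2$), so $M = |\mathrm{Sym}^2(F_4)(\Q(\zeta_8))|$, and the Abel-Jacobi embedding
\[
\alpha \colon \mathrm{Sym}^2(F_4) \hookrightarrow \Jac(F_4), \qquad D \mapsto [D - 2B_0],
\]
identifies $\mathrm{Sym}^2(F_4)(\Q(\zeta_8))$ with the set of $\Q(\zeta_8)$-rational points on the image surface $W_2 \subset \Jac(F_4)$. Since Theorem~\ref{MainTheorem3} gives $\Jac(F_4)(\Q(\zeta_8)) \cong (\Z/4\Z)^{\oplus 5}\oplus \Z/2\Z$ explicitly as the finite group of order $2048$ generated by $e_1,\dots,e_5,e_6$, the problem reduces to a finite check: for each of the $2048$ candidates $x \in \Jac(F_4)(\Q(\zeta_8))$, decide whether $x + [2B_0]$ is represented by an effective divisor of degree $2$ over $\Q(\zeta_8)$, i.e.~whether the associated Riemann-Roch space is nonzero.

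The hard part will be carrying out this enumeration and effectiveness test cleanly, which I would handle with the aid of computer algebra. Using the explicit representatives of $e_1,\dots,e_5,e_6$ by divisors supported on the cusps (all with $\Q(\zeta_8)$-rational coordinates), each class $x$ is realised as a concrete degree-$0$ divisor on $F_4$, and effectiveness of $x+2B_0$ can be tested by a linear-algebra computation inside a space of homogeneous polynomials cutting out the appropriate Riemann-Roch space on the plane quartic. The count should land at exactly $166 = 12 + \binom{12}{2} + 88$, corresponding to the $12$ divisors $2P$ at the cusps, the $\binom{12}{2}$ sums $P+Q$ of distinct cusps, and the $88$ conjugate-pair divisors; any over- or under-count would have to be tracked down and repaired. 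Once $M=166$ is confirmed, Lemma~\ref{Lemma:QuadraticPoints} yields that the listed $12 + 2 \cdot 88 = 188$ points are exactly the points of $F_4$ defined over quadratic extensions of $\Q(\zeta_8)$.
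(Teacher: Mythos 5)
Your proposal is correct and follows essentially the same route as the paper: both reduce the problem via Lemma \ref{Lemma:QuadraticPoints} to showing $M=166$, and both compute $M$ by enumerating the $2048$ elements of $\Jac(F_4)(\Q(\zeta_8))$ (known explicitly from Theorem \ref{MainTheorem3}) and testing each class $x+[2B_0]$ in $\Pic^2$ for effectivity by computing the dimension of the space of global sections with a computer algebra system. Your added remark that non-hyperellipticity makes the effective representative in each class unique is the same observation the paper uses inside the proof of Lemma \ref{Lemma:QuadraticPoints}.
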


\begin{proof}
We have already seen that $F_4$ has
at least $12$ points defined over $\Q(\zeta_8)$.
Also, $F_4$ has at least
$88 = (48 + 32 + 96)/2$
conjugate pairs over $\Q(\zeta_8)$.
Since $12(12+1)/2 + 88 = 166$,
it is enough to show that the number of linear equivalence classes of
effective divisors of degree $2$ on $C$ defined over $\Q(\zeta_8)$
is equal to $166$.

This can be shown as follows.
Since $F_4$ has a $\Q(\zeta_8)$-rational point (such as $B_0 = [1 : 0 : 1]$),
the following map is bijective:
\[
  \Jac(F_4)(\Q(\zeta_8)) \overset{\cong}{\longrightarrow} \Pic^2(F_4 \otimes_{\Q} \Q(\zeta_8)),
  \qquad
  [D] \mapsto [D + 2 B_0].
\]

By Theorem \ref{MainTheorem3},
we know that $\Jac(F_4)(\Q(\zeta_8))$
is an abelian group of order $2048$.
We also know generators of this group explicitly.
Therefore, we can make a list of divisors representing
all of the $2048$ elements of $\Pic^2(F_4 \otimes_{\Q} \Q(\zeta_8))$.
For each divisor of degree $2$,
there is an algorithm using Gr\"obner basis
which calculates the dimension of the global sections of the line bundle
associated to it.
It is implemented in standard computer algebra systems.
Hence we can count the number of linear equivalence classes of
effective divisors of degree $2$ on $F_4$ defined over $\Q(\zeta_8)$.
It turns out that this number is equal to $166$.
(The authors calculated it using Singular.
See Appendix \ref{Appendix:MethodsCalculation} for the methods of calculation.)
\end{proof}

The following result is
considered as an analogue of Fermat's Last Theorem for
the quartic equation $X^4 + Y^4 = Z^4$ over
quadratic extensions of $\Q(\zeta_8)$.

\begin{cor}
\label{Corollary:FermatQuadraticQZeta8}
Let $K$ be a quadratic extension of $\Q(\zeta_8)$
which does not contain any of $2^{1/4}$, $\zeta_3$, $\sqrt{-7}$.
Then there do not exist any $K$-rational point
on the Fermat quartic $F_4$
other than the $12$ cusps $A_i, B_i, C_i$ $(0 \leq i \leq 3)$.
\end{cor}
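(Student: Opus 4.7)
The strategy is to read the corollary off directly from the complete list in Theorem \ref{MainTheorem4}. Any $K$-rational point on $F_4$ is, in particular, defined over some quadratic extension of $\Q(\zeta_8)$, so it must appear among the $188$ points enumerated there. The $12$ cusps are already $\Q(\zeta_8)$-rational and hence $K$-rational for every $K/\Q(\zeta_8)$, so the task is to show that none of the remaining $176$ non-cusp points lies in $F_4(K)$ under the stated hypothesis.

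First I would determine, for each of the three families of non-cusp points listed in Theorem \ref{MainTheorem4}, the minimal field of definition over $\Q(\zeta_8)$. Inspection of the explicit coordinates shows that each of the $48$ points in family (2) has a coordinate involving $2^{1/4}$ which is not in $\Q(\zeta_8)$ but which does lie in $\Q(2^{1/4},\zeta_8)$; hence its minimal field of definition over $\Q(\zeta_8)$ is exactly $\Q(2^{1/4},\zeta_8)$. Analogously, each of the $32$ points in family (3) has minimal field of definition $\Q(\zeta_3,\zeta_8)$, and each of the $96$ points in family (4) has minimal field of definition $\Q(\sqrt{-7},\zeta_8)$. These three fields are all distinct quadratic extensions of $\Q(\zeta_8)$.

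To finish, suppose some non-cusp point $P$ were $K$-rational. Then the minimal field of definition of $P$ over $\Q(\zeta_8)$ would be contained in $K$. By the previous step this minimal field is one of $\Q(2^{1/4},\zeta_8)$, $\Q(\zeta_3,\zeta_8)$, $\Q(\sqrt{-7},\zeta_8)$, and each of them is already of degree $2$ over $\Q(\zeta_8)$. Since $K$ is itself a quadratic extension of $\Q(\zeta_8)$, the inclusion forces $K$ to coincide with one of these three fields, so $K$ must contain at least one of $2^{1/4}$, $\zeta_3$, $\sqrt{-7}$; this contradicts the hypothesis. Therefore only the $12$ cusps are $K$-rational. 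The deduction is purely formal once Theorem \ref{MainTheorem4} has been established, so there is no real obstacle here; the only care needed is the routine verification that each non-cusp point in the list has minimal field of definition exactly as claimed, which is immediate from the explicit coordinates.
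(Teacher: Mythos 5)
Your proposal is correct and is exactly the deduction the paper intends: the corollary is stated immediately after Theorem \ref{MainTheorem4} with no separate proof, precisely because it follows by reading off the minimal fields of definition of the $176$ non-cusp points from the explicit list, as you do. The only point needing care --- that each non-cusp point is genuinely not $\Q(\zeta_8)$-rational, so its field of definition is the full quadratic extension --- is already recorded in the paper's enumeration and you handle it correctly.
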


\begin{rem}
The non-existence of non-cuspidal $\Q(\zeta_8)$-rational points on
the Fermat quartic $F_4$ was proved by
Klassen-Schaefer \cite[Proposition 6.1]{KlassenSchaefer}.
They used the finiteness of $\Jac(F_4)(\Q(\zeta_8))$
and Coleman's theory of $p$-adic abelian integrals (for $p=5$).
Their proof and our proof are completely different.
\end{rem}

\appendix

\section{The Mordell-Weil group over subfields of $\Q(\zeta_8)$}
\label{Appendix:MordellWeilGroupSubfields}

The $8$-th cyclotomic field $\Q(\zeta_8)$ contains
the following $4$ proper subfields:
$\Q$, $\Q(\sqrt{-1})$, $\Q(\sqrt{2})$, $\Q(\sqrt{-2})$.
In this appendix, we shall calculate
the Mordell-Weil group of $\Jac(F_4)$ over each subfield of $\Q(\zeta_8)$.

\subsection{The Mordell-Weil group over $\Q$}

In order to illustrate our methods, we shall explain how to
calculate $\Jac(F_4)(\Q)$ using our results in this paper.
Since
$\Gal(\Q(2^{1/4},\zeta_8)/\Q)$ is generated by $\sigma$ and $\tau$,
we have
\begin{align*}
\Jac(F_4)(\Q)
&\cong \{\, v \in (\Z/4\Z)^{\oplus 6} \mid (\rho_4(\sigma) - I_6)v = (\rho_4(\tau) - I_6)v = 0 \,\},
\end{align*}
where $I_6$ denotes the identity matrix of size $6$.

The calculation of the right hand side is
an easy exercise in linear algebra.
We shall consider the following invertible matrix mod $4$:
\[
P :=
\begin{pmatrix}
1 & 0 & 1 & 0 & 0 & 3 \\
0 & 1 & 0 & 0 & 0 & 0 \\
0 & 0 & 1 & 0 & 0 & 0 \\
0 & 0 & 0 & 1 & 0 & 0 \\
2 & 0 & 0 & 0 & 3 & 2 \\
0 & 0 & 2 & 0 & 3 & 3
\end{pmatrix}
\]
(This matrix $P$ was found by elementary column operations
applied to both matrices $\rho_4(\sigma) - I_6$ and $\rho_4(\tau) - I_6$
simultaneously.)
Then the matrices
$(\rho_4(\sigma) - I_6) P$, $(\rho_4(\tau) - I_6) P$
are equal to
\begin{align*}
\begin{pmatrix}
0 & 0 & 2 & 0 & 1 & 3 \\
0 & 0 & 2 & 0 & 3 & 1 \\
0 & 0 & 2 & 0 & 1 & 1 \\
2 & 0 & 2 & 0 & 2 & 1 \\
0 & 0 & 0 & 0 & 0 & 2 \\
0 & 0 & 0 & 0 & 2 & 2
\end{pmatrix}, & & 
\begin{pmatrix}
2 & 0 & 0 & 0 & 1 & 2 \\
0 & 0 & 0 & 0 & 0 & 1 \\
0 & 0 & 0 & 0 & 1 & 0 \\
1 & 0 & 0 & 0 & 0 & 0 \\
0 & 0 & 0 & 0 & 2 & 0 \\
0 & 0 & 0 & 0 & 0 & 2
\end{pmatrix},
\end{align*}
respectively.
We put
$P^{-1} v = (x_1,x_2,x_3,x_4,x_5,x_6)^{\mathrm{T}}$
for some $x_1,\ldots,x_7 \in \Z/4\Z$,
where the superscript $\mathrm{T}$ denotes the transpose.
Assume that
$(\rho_4(\sigma) - I_6) v = (\rho_4(\tau) - I_6) v = 0$.
Then, from the fourth row of $(\rho_4(\tau) - I_6)P$,
we have $x_1 = 0$.
Similarly, from the third and the second rows of $(\rho_4(\tau) - I_6)P$,
we have $x_5 = x_6 = 0$.
Then, from the first row of $(\rho_4(\sigma) - I_6)P$,
we have $2 x_3 = 0$. Hence we have $x_3 \in 2\Z/4\Z$.
The elements $x_2,x_4$ are arbitrary because all of the entries in
the second and the fourth columns of
$(\rho_4(\sigma) - I_6) P$, $(\rho_4(\tau) - I_6) P$
are zero.

From these calculations, we see that
$\Jac(F_4)(\Q)$ is an abelian group of order $32$ isomorphic to
$(\Z/4\Z)^{\oplus 2} \oplus \Z/2\Z$.
Moreover, looking at the second, third, and the fourth columns of $P$,
we see that the following homomorphism is an isomorphism:
\begin{align*}
  (\Z/4\Z)^{\oplus 2} \oplus \Z/2\Z &\overset{\cong}{\longrightarrow}
   \Jac(F_4)(\Q), \\
  (c_1,c_2,c_3) &\mapsto c_1 e_2 + c_2 e_4 + c_3 (2 e_1 + 2 e_3).
\end{align*}

Since $F_4$ has a $\Q$-rational point (such as $B_0 = [1 : 0 : 1]$),
every element of $\Jac(F_4)(\Q)$ is represented by a divisor defined over $\Q$;
see \cite[Chapter 8, Proposition 4]{BoschLuetkebohmertRaynaud},
\cite[Section 5.7.1]{Poonen:RationalPoints}.
Let us confirm it for the basis $e_2, e_4, 2 e_1 + 2 e_3$ of $\Jac(F_4)(\Q)$.
By definition, $e_2, e_4$ are represented by divisors defined over $\Q$.
It is a non-trivial fact that the divisor class
$2 e_1 + 2 e_3 = [2 A_1 + 2 B_1 - 4 B_0]$
is represented by a divisor defined over $\Q$.
(The points $A_1, B_1$ are not $\Q$-rational points.)
To see this, we note that the divisor $2 A_1 + 2 B_1 - 4 B_0$
is linearly equivalent to $A_0 + B_0 - A_2 - B_2$.
(This can be checked using Rohrlich's results;
see Section \ref{Section:Rohrlich}.)
Hence we have
$2 e_1 + 2 e_3 = [A_0 + B_0 - A_2 - B_2]$.
Since $A_0,A_2,B_0,B_2$ are $\Q$-rational points,
we see that $2 e_1 + 2 e_3$ is represented by a divisor defined over $\Q$.

\begin{rem}
The structure of $\Jac(F_4)(\Q)$ is presumably well-known.
In \cite{Faddeev}, Faddeev claimed (without proof) that $\Jac(F_4)(\Q)$ has order $32$.
In \cite[p.19, Proposition 2]{Klassen:Thesis},
Klassen describes an isomorphism 
$\Jac(F_4)(\Q) \cong (\Z/4\Z)^{\oplus 2} \oplus \Z/2\Z$,
which is essentially the same as above.
Klassen also gave a sketch of the proof in \cite[p.18-20]{Klassen:Thesis},
but his proof does not seem complete.
The authors of this paper could not find a complete proof of
this isomorphism in the literature.
\end{rem}

\subsection{The Mordell-Weil group over $\Q(\sqrt{-1})$}

The method of the calculation is basically the same as in the case of $\Q$.

The Galois group 
$\Gal(\Q(2^{1/4},\zeta_8)/\Q(\sqrt{-1}))$ is
a cyclic group of order $4$ generated by $\sigma$.
Hence we have
\begin{align*}
\Jac(F_4)(\Q(\sqrt{-1}))
&\cong \{\, v \in (\Z/4\Z)^{\oplus 6} \mid (\rho_4(\sigma) - I_6) v = 0 \,\},
\end{align*}

Since we have
\[
(\rho_4(\sigma) - I_6) \begin{pmatrix}
1 & 0 & 0 & 0 & 0 & 0 \\
0 & 1 & 0 & 0 & 0 & 0 \\
0 & 0 & 1 & 0 & 0 & 0 \\
0 & 0 & 0 & 1 & 0 & 0 \\
0 & 0 & 0 & 0 & 1 & 1 \\
0 & 0 & 0 & 0 & 0 & 3
\end{pmatrix}
=
\begin{pmatrix}
0 & 0 & 0 & 0 & 2 & 1 \\
0 & 0 & 0 & 0 & 2 & 3 \\
0 & 0 & 0 & 0 & 0 & 1 \\
0 & 0 & 0 & 0 & 1 & 0 \\
0 & 0 & 0 & 0 & 2 & 0 \\
0 & 0 & 0 & 0 & 0 & 2
\end{pmatrix},
\]
the Mordell-Weil group
$\Jac(F_4)(\Q(\sqrt{-1}))$ is isomorphic to $(\Z/4\Z)^{\oplus 4}$.
The following homomorphism is an isomorphism:
\begin{align*}
  (\Z/4\Z)^{\oplus 4} &\overset{\cong}{\longrightarrow}
   \Jac(F_4)(\Q(\sqrt{-1})), \\
  (c_1,c_2,c_3,c_4) &\mapsto c_1 e_1 + c_2 e_2 + c_3 e_3 + c_4 e_4.
\end{align*}
By definition, the divisor classes $e_1, e_2, e_3, e_4$
are represented by divisors defined over $\Q(\sqrt{-1})$.

\subsection{The Mordell-Weil group over $\Q(\sqrt{2})$}

We have
\begin{align*}
\Jac(F_4)(\Q(\sqrt{2}))
&\cong \{\, v \in (\Z/4\Z)^{\oplus 6} \mid (\rho_4(\sigma)^2 - I_6)v = (\rho_4(\tau) - I_6)v = 0 \,\}
\end{align*}
because $\Gal(\Q(2^{1/4},\zeta_8)/\Q(\sqrt{2}))$
is generated by $\sigma^2$ and $\tau$.
For the square matrix
\[
P :=
\begin{pmatrix}
1 & 0 & 1 & 0 & 0 & 3 \\
0 & 1 & 0 & 0 & 0 & 0 \\
0 & 0 & 1 & 0 & 0 & 0 \\
0 & 0 & 0 & 1 & 0 & 0 \\
2 & 0 & 0 & 0 & 3 & 2 \\
0 & 0 & 2 & 0 & 3 & 3
\end{pmatrix},
\]
the matrices $(\rho_4(\sigma)^2 - I_6) P$, $(\rho_4(\tau) - I_6) P$
are equal to
\begin{align*}
\begin{pmatrix}
0 & 0 & 0 & 0 & 0 & 0 \\
0 & 0 & 0 & 0 & 0 & 0 \\
0 & 0 & 0 & 0 & 0 & 0 \\
0 & 0 & 0 & 0 & 2 & 2 \\
0 & 0 & 0 & 0 & 0 & 0 \\
0 & 0 & 0 & 0 & 0 & 0
\end{pmatrix}, & &
\begin{pmatrix}
2 & 0 & 0 & 0 & 1 & 2 \\
0 & 0 & 0 & 0 & 0 & 1 \\
0 & 0 & 0 & 0 & 1 & 0 \\
1 & 0 & 0 & 0 & 0 & 0 \\
0 & 0 & 0 & 0 & 2 & 0 \\
0 & 0 & 0 & 0 & 0 & 2
\end{pmatrix},
\end{align*}
respectively.
Therefore, $\Jac(F_4)(\Q(\sqrt{2}))$ is
isomorphic to $(\Z/4\Z)^{\oplus 3}$.
The following homomorphism is an isomorphism:
\begin{align*}
  (\Z/4\Z)^{\oplus 3} &\overset{\cong}{\longrightarrow}
   \Jac(F_4)(\Q(\sqrt{2})), \\
  (c_1,c_2,c_3) &\mapsto c_1 e_2 + c_2 (e_1 + e_3 + 2 e'_6) + c_3 e_4.
\end{align*}

Let us give divisors over $\Q(\sqrt{2})$ which give
a basis of $\Jac(F_4)(\Q(\sqrt{2}))$.
The divisor classes $e_2, e_4$ are defined over $\Q$.
Hence they are defined over $\Q(\sqrt{2})$.
It can be checked (with the aid of computer algebra systems)
that the divisor class $e_1 + e_3 + 2 e'_6$ is represented by
the divisor
\[
[ -2^{-1/4} : 2^{-1/4} : 1 ] + [ 2^{-1/4} : -2^{-1/4} : 1 ] + A_0 - 3 B_0.
\]
This divisor is defined over $\Q(\sqrt{2})$.

\subsection{The Mordell-Weil group over $\Q(\sqrt{-2})$}

We have
\begin{align*}
& \Jac(F_4)(\Q(\sqrt{-2})) \\
&\cong \{\, v \in (\Z/4\Z)^{\oplus 6} \mid (\rho_4(\sigma)^2 - I_6)v
               = (\rho_4(\tau \sigma) - I_6)v = 0 \,\}
\end{align*}
because $\Gal(\Q(2^{1/4},\zeta_8)/\Q(\sqrt{-2}))$
is generated by $\sigma^2$ and $\tau \sigma$.

For the square matrix
\[
P :=
\begin{pmatrix}
3 & 0 & 3 & 0 & 0 & 2 \\
0 & 1 & 0 & 0 & 0 & 0 \\
0 & 0 & 1 & 0 & 0 & 0 \\
0 & 0 & 0 & 1 & 0 & 0 \\
2 & 0 & 2 & 0 & 0 & 1 \\
0 & 0 & 2 & 0 & 1 & 3
\end{pmatrix},
\]
the matrices
$(\rho_4(\sigma)^2 - I_6) P$,
$(\rho_4(\tau \sigma) - I_6) P$
are equal to
\begin{align*}
\begin{pmatrix}
0 & 0 & 0 & 0 & 0 & 0 \\
0 & 0 & 0 & 0 & 0 & 0 \\
0 & 0 & 0 & 0 & 0 & 0 \\
0 & 0 & 0 & 0 & 2 & 2 \\
0 & 0 & 0 & 0 & 0 & 0 \\
0 & 0 & 0 & 0 & 0 & 0
\end{pmatrix}, & &
\begin{pmatrix}
2 & 0 & 0 & 0 & 3 & 0 \\
0 & 0 & 0 & 0 & 0 & 1 \\
0 & 0 & 0 & 0 & 1 & 0 \\
1 & 0 & 0 & 0 & 0 & 0 \\
0 & 0 & 0 & 0 & 2 & 2 \\
0 & 0 & 0 & 0 & 0 & 2
\end{pmatrix},
\end{align*}
respectively.
Therefore,
$\Jac(F_4)(\Q(\sqrt{-2}))$ is isomorphic to $(\Z/4\Z)^{\oplus 3}$.
The following homomorphism is an isomorphism:
\begin{align*}
  (\Z/4\Z)^{\oplus 3} &\overset{\cong}{\longrightarrow}
   \Jac(F_4)(\Q(\sqrt{-2})), \\
  (c_1,c_2,c_3) &\mapsto c_1 e_2 + c_2 (3 e_1 + e_3 + 2 e_5 + 2 e'_6) + c_3 e_4.
\end{align*}

Let us give divisors over $\Q(\sqrt{-2})$ which give
a basis of $\Jac(F_4)(\Q(\sqrt{-2}))$.
The divisor classes $e_2, e_4$ are defined over $\Q$.
Hence they are defined over $\Q(\sqrt{-2})$.
It can be checked (with the aid of computer algebra systems)
that the divisor class $3 e_1 + e_3 + 2 e_5 + 2 e'_6$
is represented by the divisor
\begin{align*}
B_2 + C_2 + C_3 - 3 A_0 &= 
[-1 : 0 : 1] + [\zeta_8^5 : 1 : 0] + [\zeta_8^7 : 1 : 0] - 3 [0 : 1 : 1].
\end{align*}
This divisor is defined over $\Q(\sqrt{-2})$
since $\zeta_8^5,\zeta_8^7$ are the roots of
the quadratic polynomial $X^2 + \sqrt{-2} X - 1$ over $\Q(\sqrt{-2})$.

\section{Explicit calculation of the Weil pairing}
\label{Appendix:WeilPairing}

Recall that we have a non-degenerate alternating bilinear form
\begin{align*}
\langle,\rangle \colon \Jac(F_4)[4] \times \Jac(F_4)[4]
&\longrightarrow \mu_4 \cong \Z/4\Z,
\end{align*}
called the {\it Weil pairing},
where $\mu_4 := \{\, \pm 1, \pm \zeta_4 \,\}$ is the group of
fourth roots of unity.
The isomorphism $\mu_4 \cong \Z/4\Z$ is given by
$\zeta_4 \mapsto 1$.

Since the Weil pairing $\langle,\rangle$ is Galois equivariant,
the image of the mod $4$ Galois representation $\rho_4$
sits in the symplectic similitude group
\begin{align*}
  &  \mathrm{GSp}(\Jac(F_4)[4], \langle,\rangle) \\
  &:= \bigg\{\, (g,c) \in \Aut(\Jac(F_4)[4]) \times (\Z/4\Z)^{\times}
      \ \bigg| \ 
      \begin{array}{l} \langle g \alpha, g \beta \rangle = c \langle \alpha, \beta \rangle \\
      \text{for all}\ \alpha, \beta \in \Jac(F_4)[4]
      \end{array}
      \bigg\}.
\end{align*}

In order to calculate the image of $\rho_4$ inside
$\mathrm{GSp}(\Jac(F_4)[4], \langle,\rangle)$,
we need to calculate the matrix
\[
\begin{pmatrix}
\langle e_1,e_1 \rangle  & \langle e_1,e_2 \rangle  & \langle e_1,e_3 \rangle &
\langle e_1,e_4 \rangle  & \langle e_1,e_5 \rangle  & \langle e_1,e'_6 \rangle \\
\langle e_2,e_1 \rangle  & \langle e_2,e_2 \rangle  & \langle e_2,e_3 \rangle &
\langle e_2,e_4 \rangle  & \langle e_2,e_5 \rangle  & \langle e_2,e'_6 \rangle \\
\langle e_3,e_1 \rangle  & \langle e_3,e_2 \rangle  & \langle e_3,e_3 \rangle &
\langle e_3,e_4 \rangle  & \langle e_3,e_5 \rangle  & \langle e_3,e'_6 \rangle \\
\langle e_4,e_1 \rangle  & \langle e_4,e_2 \rangle  & \langle e_4,e_3 \rangle &
\langle e_4,e_4 \rangle  & \langle e_4,e_5 \rangle  & \langle e_4,e'_6 \rangle \\
\langle e_5,e_1 \rangle  & \langle e_5,e_2 \rangle  & \langle e_5,e_3 \rangle &
\langle e_5,e_4 \rangle  & \langle e_5,e_5 \rangle  & \langle e_5,e'_6 \rangle \\
\langle e'_6,e_1 \rangle & \langle e'_6,e_2 \rangle & \langle e'_6,e_3 \rangle &
\langle e'_6,e_4 \rangle & \langle e'_6,e_5 \rangle & \langle e'_6,e'_6 \rangle
\end{pmatrix}
\]
representing the Weil pairing
with respect to the basis $e_1,e_2,e_3,e_4,e_5,e'_6$.

Here is the summary of our results on the explicit calculation of the Weil pairing.

\begin{thm}
\label{Theorem:WeilPairing}
With respect to the basis $e_1,e_2,e_3,e_4,e_5,e'_6$ of $\Jac(F_4)[4]$,
the matrix representing the Weil pairing $\langle,\rangle$ is given by
\[
\begin{pmatrix}
0 & 1 & 3 & 0 & 3 & 1 \\
3 & 0 & 3 & 0 & 1 & 1 \\
1 & 1 & 0 & 2 & 3 & 0 \\
0 & 0 & 2 & 0 & 0 & 1 \\
1 & 3 & 1 & 0 & 0 & 1 \\
3 & 3 & 0 & 3 & 3 & 0
\end{pmatrix}.
\]
\end{thm}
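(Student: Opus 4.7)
The plan is to compute the Weil pairing directly from its classical description. If $[D], [E] \in \Jac(F_4)[4]$ are represented by divisors with disjoint support, and $f, g$ are rational functions on $F_4 \otimes_\Q \overline{\Q}$ with $\mathrm{div}(f) = 4D$, $\mathrm{div}(g) = 4E$, then
\[
\langle [D], [E] \rangle \;=\; \frac{f(E)}{g(D)}, \qquad f(\textstyle\sum n_P P) := \prod_P f(P)^{n_P}.
\]
Since the pairing is alternating and non-degenerate on $(\Z/4\Z)^{\oplus 6}$, only the $15$ above-diagonal entries need to be computed; the diagonal vanishes and antisymmetry fills in the rest.

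The key construction is an explicit rational function realising $4e_i$ as a principal divisor, for every basis vector. For the cuspidal basis elements, the Fermat equation forces each linear form $Z - \zeta_4^j Y$, $Z - \zeta_4^j X$, and $X - \zeta_8 \zeta_4^j Y$ to meet $F_4$ at a single cusp with multiplicity $4$, so on $F_4$ their divisors are $4A_i$, $4B_i$, and $4C_i$ respectively. Appropriate ratios yield rational functions with divisors $4\alpha_i$, $4\beta_i$, $4\gamma_i$, and products realise $4e_6$. For the non-cuspidal vector $e'_6$, I would take the function $f$ from the proof of Proposition \ref{Proposition:KeyDivisor}: its divisor equals $2e'_6 - (A_1 + 3A_2 + B_1 + 3B_2 + C_1 + C_2 - 10B_0)$. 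The subtracted divisor represents the $2$-torsion class $2e'_6 = 2e_2 + 2e_4 + e_6$, so twice it is principal and can be exhibited as a product $h$ of the cuspidal linear forms above via Rohrlich's relations (Proposition \ref{Proposition:Rohrlich}). Then $f^2 \cdot h$ has divisor $4(P_1 + P_2 + P_3 - 3B_0) = 4e'_6$.

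With these functions in hand, for each pair of basis vectors I would shift representatives to disjoint-support position (using Rohrlich's relations to replace $B_0$ by other cusps when necessary), evaluate at the relevant points in $F_4(\Q(2^{1/4}, \zeta_8))$, and read off the pairing as a fourth root of unity. A powerful consistency check comes from Galois equivariance: $\rho_4$ lands in the symplectic similitude group whose similitude character is the mod-$4$ cyclotomic character $\chi$, and since $\chi(\sigma) = 1$ and $\chi(\tau) = -1$, the Weil pairing matrix $M$ must satisfy $\rho_4(\sigma)^{\mathrm{T}} M \rho_4(\sigma) = M$ and $\rho_4(\tau)^{\mathrm{T}} M \rho_4(\tau) = -M$. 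These linear constraints on the $15$ free entries, together with a few directly computed entries, suffice to force the claimed matrix. The main obstacle is the symbolic evaluation for the $e'_6$-row: because the function $f^2 h$ inherits the large coefficients of $f$ involving the generator $\delta$ of $\Q(2^{1/4}, \zeta_8)$, all evaluations at points such as $P_1, P_2, P_3$ must be carried out carefully in a computer algebra system, after which only bookkeeping of the resulting $\mu_4$-values remains.
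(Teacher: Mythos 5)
Your overall strategy is the paper's: produce, for each basis vector, an explicit rational function whose divisor is $4$ times a representative, then evaluate the Weil-pairing formula by computer. The tangent-line observation (each of $Y-\zeta_4^iZ$, $X-\zeta_4^iZ$, $X-\zeta_8\zeta_4^iY$ meets $F_4$ in a single cusp with multiplicity $4$) is exactly what underlies the paper's functions $f_1,\dots,f_5,g_1$. One procedural difference: since every one of your representatives contains $B_0$ in its support, the classical disjoint-support formula forces you to shift $15$ pairs of divisors apart before evaluating; the paper avoids this entirely by using Howe's version of the formula, which is valid at common support points because the ratio $f_E^{\ord_PD}/f_D^{\ord_PE}$ has order zero there. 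Your way is workable but adds substantial bookkeeping, and you must also pin down the sign convention in $f(E)/g(D)$ versus $g(D)/f(E)$ --- the paper's remark notes that the opposite convention yields the transpose of the stated matrix, so the convention is not a cosmetic choice here.

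The one concrete error is in your construction of $h$. Writing $\divi(f)=2(P_1+P_2+P_3)-D_0$ with $D_0=A_1+3A_2+B_1+3B_2+C_1+C_2-4B_0$, you need $\divi(h)=2D_0-12B_0+8B_0$, i.e.\ a function with divisor $2A_1+6A_2+2B_1+6B_2+2C_1+2C_2-20B_0$. This \emph{cannot} be a product of ratios of ``the cuspidal linear forms above'': each such ratio has divisor lying in $4\cdot(\text{divisors supported on cusps})$, whereas the required divisor has multiplicities $2$ and $6$ at several cusps and so is not in that lattice. To realize it you need further cusp-supported units (e.g.\ $X/Z$, $Y/Z$, or lines through several cusps --- the functions actually underlying Rohrlich's relations), or else a direct computation of a function with the prescribed divisor; the paper takes the latter route in one step, exhibiting an explicit cubic $f'_6$ with $\divi(f'_6/g_1^3)=4(P_1+P_2+P_3-3B_0)$. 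Once $h$ is repaired in this way your argument goes through. Finally, the equivariance constraints $\rho_4(\sigma)^{\mathrm T}M\rho_4(\sigma)=M$ and $\rho_4(\tau)^{\mathrm T}M\rho_4(\tau)=-M$ are a genuine and worthwhile check, but since the image of $\rho_4$ has only $8$ elements the space of alternating forms satisfying them is far from one-dimensional, so they cannot replace the bulk of the direct evaluations.
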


\begin{proof}
First, we recall how to calculate the Weil pairing
on a smooth projective curve $C$ over a field $k$.
Let $n \geq 1$ be a positive integer invertible in $k$,
and $D,E$ divisors of degree $0$ on $C$ defined over $\overline{k}$
representing $n$-torsion points on $\Jac(F_4)$.
Take non-zero rational functions $f_D,f_E$ satisfying
$\mathrm{div}\,f_D = n D$ and $\mathrm{div}\,f_E = n E$.
Then the Weil pairing is calculated by the following formula:
\[ \langle [D],[E] \rangle = \prod_{P \in C(\overline{k})}
   (-1)^{n (\ord_P D)(\ord_P E)} \frac{f_E^{\ord_P D}}{f_D^{\ord_P E}}(P). \]
(See \cite[Theorem 1]{Howe:WeilPairing} and references therein.)

We shall apply the above formula for the Fermat quartic $F_4$ and $n = 4$.
Let $\delta$ be the element as in the proof of
Proposition \ref{Proposition:KeyDivisor}.
We define $c'_1, c'_2, c'_3, c'_4, c'_5$ by
\begin{align*}
c'_1 &:= -52 \delta^7+9 \delta^6+224 \delta^5-36 \delta^4-496 \delta^3+63 \delta^2+452 \delta+93, \\
c'_2 &:= 148 \delta^7+80 \delta^6-476 \delta^5-280 \delta^4+736 \delta^3+449 \delta^2+88 \delta-58, \\
c'_3 &:= 216 \delta^7+71 \delta^6-780 \delta^5-253 \delta^4+1476 \delta^3+515 \delta^2-312 \delta-160, \\
c'_4 &:= 73 \delta^6-365 \delta^4+730 \delta^2-365, \\
c'_5 &:= -80 \delta^7+158 \delta^6+352 \delta^5-553 \delta^4-824 \delta^3+728 \delta^2+568 \delta+203.
\end{align*}
We define linear polynomials $f_1,f_2,f_3,f_4,f_5,g_1$ by
\begin{align*}
f_1 &:= Y - \zeta_4 Z, &
f_2 &:= Y + Z, &
f_3 &:= X - \zeta_4 Z, \\
f_4 &:= X + Z, &
f_5 &:= X - \zeta_8^3 Y, &
g_1 &:= X - Z.
\end{align*}
Moreover, we define a cubic polynomial $f'_6$ by
\begin{align*}
f'_6 &:=
219 (X^3 + Y^3) + c'_1 (X^2 Y + X Y^2) + c'_2 (X^2 Z + Y^2 Z) \\
& + c'_3 (X Z^2 + Y Z^2) + c'_4 Z^3 + c'_5 XYZ.
\end{align*}

Then, it can be checked (with the aid of computer algebra systems)
that the divisors associated with the rational functions
$f_1/g_1$, $f_2/g_1$, $f_3/g_1$, $f_4/g_1$, $f_5/g_1$, $f'_6/g^3_1$
are
\begin{align*}
\mathrm{div}(f_1/g_1) &= 4 (A_1 - B_0), &
\mathrm{div}(f_2/g_1) &= 4 (A_2 - B_0), \\
\mathrm{div}(f_3/g_1) &= 4 (B_1 - B_0), &
\mathrm{div}(f_4/g_1) &= 4 (B_2 - B_0), \\
\mathrm{div}(f_5/g_1) &= 4 (C_1 - B_0), &
\mathrm{div}(f'_6/g^3_1) &= 4 (P_1 + P_2 + P_3 - 3 B_0).
\end{align*}

Therefore, we can use these rational functions to calculate the Weil pairings explicitly.
(The authors calculated them using Singular.)
\end{proof}

\begin{rem}
There is a sign issue on the Weil pairing.
In some literature, some authors seem to use the Weil pairing with
opposite signs.
In Theorem \ref{Theorem:WeilPairing},
we used the formula of the Weil pairing as presented in
\cite[Theorem 1]{Howe:WeilPairing}.
If we calculate the Weil pairing with opposite sign convention,
the matrix in Theorem \ref{Theorem:WeilPairing} is replaced
by the transpose of it.
\end{rem}

By Theorem \ref{Theorem:WeilPairing},
it is an easy exercise in linear algebra to
determine the image of the mod $4$ Galois representation $\rho_4$
inside the symplectic similitude group $\mathrm{GSp}_6(\Z/4\Z)$.
Here $\mathrm{GSp}_6(\Z/4\Z)$ is defined by
\begin{align*}
  \mathrm{GSp}_6(\Z/4\Z)
  &:= \bigg\{\, (g,c) \in \GL_6(\Z/4\Z) \times (\Z/4\Z)^{\times}
      \ \bigg| \ g^{\mathrm{T}} J g = c J
      \,\bigg\},
\end{align*}
where $g^{\mathrm{T}}$ denotes the transpose of $g$.
Here $J \in \GL_6(\Z/4\Z)$ is defined by
\[
J := \begin{pmatrix}
0 & 0 & 0 & 0 & 0 & 1 \\
0 & 0 & 0 & 0 & 1 & 0 \\
0 & 0 & 0 & 1 & 0 & 0 \\
0 & 0 & 3 & 0 & 0 & 0 \\
0 & 3 & 0 & 0 & 0 & 0 \\
3 & 0 & 0 & 0 & 0 & 0
\end{pmatrix}.
\]
(Note that $3 \equiv -1 \pmod{4}$.)
We define $e''_1,e''_2,e''_3,e''_4,e''_5,e''_6 \in \Jac(F_4)[4]$ by
\begin{align*}
e''_1 &:= e_1, &
e''_2 &:= e_2 + 3 e'_6, \\
e''_3 &:= e_3 + e'_6, &
e''_4 &:= 3 e_4 + 3 e_5 + 3 e'_6, \\
e''_5 &:= e_3 + 2 e_4 + e_5 + 2 e'_6, &
e''_6 &:= 3 e_2 + e_3 + 2 e_4 + 3 e_5 + 2 e'_6.
\end{align*}

Then we have the following corollary.

\begin{cor}
\label{Corollary:WeilPairingGSp(6)}
The elements $e''_1,e''_2,e''_3,e''_4,e''_5,e''_6$ form a basis of $\Jac(F_4)[4]$
such that the Weil pairing $\langle,\rangle$ is represented by $J$.
Moreover, the images of $\sigma, \tau \in \Gal(\Q(2^{1/4},\zeta_8)/\Q)$
by the mod $4$ Galois representation
\[ \Gal(\Q(2^{1/4},\zeta_8)/\Q) \to \mathrm{GSp}_6(\Z/4\Z)  \]
with respect to the basis $e''_1,e''_2,e''_3,e''_4,e''_5,e''_6$
are given by the following matrices:
\begin{align*}
& \begin{pmatrix}
1 & 3 & 1 & 1 & 0 & 0 \\
0 & 2 & 3 & 3 & 0 & 0 \\
0 & 0 & 1 & 3 & 1 & 3 \\
0 & 3 & 1 & 3 & 3 & 1 \\
0 & 1 & 3 & 2 & 2 & 3 \\
0 & 0 & 0 & 0 & 0 & 1
\end{pmatrix}, & &
\begin{pmatrix}
1 & 0 & 0 & 1 & 3 & 1 \\
2 & 1 & 2 & 1 & 1 & 3 \\
3 & 1 & 3 & 1 & 1 & 1 \\
3 & 1 & 2 & 1 & 2 & 0 \\
3 & 0 & 1 & 3 & 3 & 0 \\
0 & 3 & 3 & 1 & 2 & 3
\end{pmatrix}.
\end{align*}
\end{cor}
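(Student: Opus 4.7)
The plan is to verify Corollary \ref{Corollary:WeilPairingGSp(6)} by reducing it to three explicit matrix identities over $\Z/4\Z$, using as inputs the Weil pairing matrix $W$ from Theorem \ref{Theorem:WeilPairing} and the Galois matrices $\rho_4(\sigma),\rho_4(\tau)$ from Theorem \ref{MainTheorem2}. The new basis $(e''_1,\ldots,e''_6)$ has already been handed to us, so the corollary's content is purely that this change of basis (i) brings the Weil pairing into the standard symplectic form $J$, and (ii) conjugates the two given Galois matrices into the two matrices displayed in the statement.

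First, I would write down the change-of-basis matrix $P\in M_6(\Z/4\Z)$ whose $j$-th column records the coordinates of $e''_j$ with respect to $(e_1,e_2,e_3,e_4,e_5,e'_6)$, read off directly from the definitions of $e''_1,\ldots,e''_6$. A quick determinant computation shows $\det(P)\in(\Z/4\Z)^{\times}$, so $(e''_1,\ldots,e''_6)$ is automatically a $\Z/4\Z$-basis of $\Jac(F_4)[4]$.

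Second, I would verify the identity $P^{\mathrm{T}} W P \equiv J \pmod 4$, a finite check on $6\times 6$ matrices over $\Z/4\Z$. By the bilinearity of the Weil pairing this shows that $\langle e''_i,e''_j\rangle$ is represented by $J$ in the new basis. Third, using the transformation rule $\rho_4(g)\mapsto P^{-1}\rho_4(g)P$ for the Galois action under change of basis (the columns of $\rho_4(g)$ recording the expansion of $g\cdot e_i$ in the basis $e_j$, as fixed in the proof of Theorem \ref{MainTheorem2}), I would compute $P^{-1}\rho_4(\sigma)P$ and $P^{-1}\rho_4(\tau)P$ and check that they agree with the two matrices in the statement; equivalently, one checks that $\rho_4(\sigma)P$ and $\rho_4(\tau)P$ coincide with $P$ times the claimed matrices, avoiding the computation of $P^{-1}$. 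Galois equivariance of $\langle,\rangle$ then automatically places these conjugates in $\mathrm{GSp}_6(\Z/4\Z)$, with multiplier equal to the mod $4$ cyclotomic character evaluated at $\sigma,\tau$, which gives a useful sanity check.

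The main obstacle here is not computational but conceptual: finding the basis $(e''_i)$ in the first place amounts to performing a symplectic Gram-Schmidt, or Darboux normalization, on $W$ over $\Z/4\Z$. This is slightly delicate because $\Z/4\Z$ is not a field and one must be careful when inverting the pivot entries; one proceeds by picking a pair with $\langle e_i,e_j\rangle\in(\Z/4\Z)^{\times}$ (possible by inspection of $W$), splitting off the corresponding hyperbolic plane, and iterating on the orthogonal complement. Once the basis is produced (as it already has been in the statement), only routine $6\times 6$ matrix arithmetic modulo $4$ remains, and all three verifications can be carried out by hand or with a computer algebra system.
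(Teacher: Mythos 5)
Your proposal is correct and takes essentially the same route as the paper, whose proof of Corollary \ref{Corollary:WeilPairingGSp(6)} simply states that the assertions follow immediately from Theorem \ref{MainTheorem2} and Theorem \ref{Theorem:WeilPairing}, i.e.\ from exactly the change-of-basis computations $P^{\mathrm{T}}WP=J$ and $P^{-1}\rho_4(g)P$ that you describe. Your remarks on the symplectic Gram--Schmidt over $\Z/4\Z$ used to find the basis $(e''_i)$, and on the multiplier being the mod $4$ cyclotomic character, are sensible additions but not needed for the verification itself.
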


\begin{proof}
The assertions immediately follow from
Theorem \ref{MainTheorem2} and Theorem \ref{Theorem:WeilPairing}.
\end{proof}

\section{The action of automorphisms on $\Jac(F_4)[4]$}
\label{Appendix:Automorphisms}

It is classically known that
the automorphism group of the Fermat quartic $F_4 \otimes_{\Q} \C$ over $\C$
is a non-abelian group of order $96$.
All of the automorphisms of $F_4 \otimes_{\Q} \C$ are defined over $\Q(\zeta_8)$.
The automorphism group $\Aut(F_4 \otimes_{\Q} \Q(\zeta_8))$
is generated by the following elements (see \cite[p.173, Theorem]{Tzermias:Automorphism}):
\begin{align*}
\theta_1 \colon [X : Y : Z] &\mapsto [\zeta_4 X : Y : Z], \\
\theta_2 \colon [X : Y : Z] &\mapsto [Y : X : Z], \\
\theta_3 \colon [X : Y : Z] &\mapsto [\zeta_8^7 Y : \zeta_4^3 Z : X].
\end{align*}

\begin{rem}
\label{Remark:AutomorphismTheta3}
The following explanation might be useful to understand
the nature 
the automorphism group of $F_4$.
Over the $8$-th cyclotomic field $\Q(\zeta_8)$,
the Fermat quartic $F_4$ is isomorphic to another quartic
\[ F'_4 := \{\, [X:Y:Z] \in \PP^2 \mid X^4 + Y^4 + Z^4 = 0 \,\} \]
via the isomorphism:
\[
\psi \colon F_4 \otimes_{\Q} \Q(\zeta_8)
\overset{\cong}{\to} F'_4 \otimes_{\Q} \Q(\zeta_8), \qquad
[X : Y : Z] \mapsto [X : Y : \zeta_8^7 Z].
\]
Under this isomorphism $\psi$,
the automorphism $\theta_3$ is translated into the cyclic permutation
of the coordinates $X,Y,Z$. Precisely, we have
\begin{align*}
\psi^{-1} \theta_3 \psi \colon [X : Y : Z] &\mapsto [Y : Z : X].
\end{align*}
\end{rem}

It is easy to see that $\theta_2,\theta_3$ generate a subgroup of
$\Aut(F_4 \otimes_{\Q} \Q(\zeta_8))$
isomorphic to the symmetric group $\mathfrak{S}_3$ of order $6$.
Since $\theta_2^2 = \id$,
we see that $\theta_2 \theta_{1} \theta_2$ is conjugate to $\theta_1$.
We have
\begin{align*}
\theta_2 \theta_1 \theta_2 \colon [X : Y : Z] &\mapsto [X : \zeta_4 Y : Z].
\end{align*}
From this, we see that
$\theta_1$ and $\theta_2 \theta_1 \theta_2$ generate an abelian subgroup of
$\Aut(F_4 \otimes_{\Q} \Q(\zeta_8))$
isomorphic to $(\Z/4 \Z)^{\oplus 2}$.
Therefore, $\Aut(F_4 \otimes_{\Q} \Q(\zeta_8))$
is isomorphic to the semi-direct product
$(\Z/4 \Z)^{\oplus 2} \rtimes \mathfrak{S}_3$.
The action of the automorphisms on the $4$-torsion points
gives a homomorphism
\[ \Aut(F_4 \otimes_{\Q} \Q(\zeta_8)) \to \Aut(\Jac(F_4)[4]) \cong \GL_6(\Z/4\Z). \]

We shall calculate this homomorphism explicitly as follows.

\begin{thm}
\label{Theorem:Automorphisms}
With respect to the basis $e_1,e_2,e_3,e_4,e_5,e'_6 \in \Jac(F_4)[4]$,
the actions of the automorphisms $\theta_{1},\theta_{2},\theta_{3}$
are represented by the following matrices:
\begin{align*}
\begin{pmatrix}
1 & 0 & 0 & 0 & 3 & 1 \\
0 & 1 & 0 & 0 & 1 & 1 \\
3 & 3 & 3 & 2 & 2 & 1 \\
0 & 0 & 1 & 3 & 1 & 2 \\
0 & 0 & 0 & 0 & 3 & 1 \\
0 & 0 & 0 & 0 & 2 & 3
\end{pmatrix}, & &
\begin{pmatrix}
2 & 2 & 3 & 2 & 1 & 2 \\
3 & 3 & 3 & 0 & 0 & 1 \\
3 & 2 & 2 & 2 & 1 & 2 \\
3 & 0 & 3 & 3 & 0 & 1 \\
0 & 0 & 0 & 0 & 3 & 0 \\
0 & 0 & 0 & 0 & 2 & 1
\end{pmatrix}, & &
\begin{pmatrix}
3 & 2 & 3 & 0 & 3 & 1 \\
2 & 3 & 3 & 2 & 2 & 2 \\
2 & 1 & 2 & 2 & 2 & 3 \\
1 & 2 & 1 & 1 & 2 & 3 \\
1 & 3 & 0 & 0 & 0 & 1 \\
0 & 2 & 0 & 0 & 0 & 3
\end{pmatrix}.
\end{align*}
\end{thm}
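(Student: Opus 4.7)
The plan is to mirror the proof of Theorem \ref{MainTheorem2}: compute the action of each $\theta_k$ on a small set of points that generates all relevant divisor classes, translate this into an action on classes $[Q-B_0]$, and then re-express everything in the basis $e_1,\ldots,e_5,e'_6$. First I would substitute directly into the defining formulas of $\theta_1,\theta_2,\theta_3$ to tabulate the images of the twelve cusps $A_i,B_i,C_i$ and of the three special points $P_1,P_2,P_3$. In each case $\theta_k$ permutes the cusps and sends each $P_j$ into the set of $48$ non-cuspidal $\Q(2^{1/4},\zeta_8)$-rational points enumerated in Section \ref{Section:QuadraticPoints}.

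Next, because $\theta_k$ moves the base point $B_0$, the induced group automorphism of $\Jac(F_4)$ acts on classes of the form $[Q-B_0]$ via
\[
\theta_k([Q-B_0]) \;=\; [\theta_k(Q)-B_0] \;-\; [\theta_k(B_0)-B_0].
\]
A direct check gives $\theta_1(B_0)=B_1$, $\theta_2(B_0)=A_0$, $\theta_3(B_0)=A_3$, so the correction term is $\beta_1$, $\alpha_0$, $\alpha_3$ respectively; each is rewritten in the basis $e_1,\ldots,e_5,e_6$ via Proposition \ref{Proposition:Rohrlich} and then via $e_6 = 2e'_6 + 2e_2 + 2e_4$ coming from Proposition \ref{Proposition:KeyDivisor}.

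For the first five basis vectors $e_1,\ldots,e_5$ the analysis is then purely combinatorial: since $\theta_k$ sends any cusp to a cusp, $[\theta_k(Q)-B_0]$ is one of $\alpha_i,\beta_i,\gamma_i$, and applying Rohrlich's relations together with the mod~$4$ identity for $e_6$ above immediately produces the first five columns of each of the three displayed matrices.

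The main obstacle is the sixth column, i.e.\ the computation of $\theta_k(e'_6)$. We have
\[
\theta_k(e'_6) \;=\; [\theta_k(P_1)+\theta_k(P_2)+\theta_k(P_3)-3B_0] \;-\; 3[\theta_k(B_0)-B_0],
\]
and the degree-$3$ divisor $\theta_k(P_1)+\theta_k(P_2)+\theta_k(P_3)$ is supported on new non-cuspidal points among the $48$. Theorem \ref{MainTheorem1} guarantees that its class is a $\Z/4\Z$-linear combination of $e_1,\ldots,e_5,e'_6$, but to read off the coefficients one must exhibit, for each $k$, an explicit rational function on $F_4$ (defined over $\Q(2^{1/4},\zeta_8)$) whose divisor realizes the linear equivalence between $\theta_k(P_1)+\theta_k(P_2)+\theta_k(P_3)$ and an appropriate combination of the twelve cusps together with a multiple of $P_1+P_2+P_3$. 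This is the exact analogue, for each $\theta_k$, of the construction carried out in Proposition \ref{Proposition:KeyDivisor}, and it is the computational heart of the argument; it will be handled with the aid of Singular, Maxima, and Sage, producing cubic rational functions parallel to the $g_2/g_1$ of that proposition. Assembling the resulting coordinates into a $6\times 6$ matrix then gives the final column of each $\theta_k$ and completes the proof.
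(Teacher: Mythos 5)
Your proposal is correct and follows essentially the same route as the paper: tabulate the images of the cusps and of $P_1,P_2,P_3$ under each $\theta_k$, use $\theta_k([Q-B_0])=[\theta_k(Q)-\theta_k(B_0)]$ (your base-point correction term is just a rewriting of this), express the resulting classes via Rohrlich's relations and $e_6=2e_2+2e_4+2e'_6$, and verify the linear equivalences needed for the $e'_6$ column by computer algebra. The only cosmetic difference is that the paper does not display the explicit cubic rational functions for the $\theta_k(e'_6)$ computations, delegating the linear-equivalence checks to Singular's \texttt{divisors.lib} rather than exhibiting analogues of $g_2/g_1$.
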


\begin{proof}
The actions of $\theta_1, \theta_2, \theta_3$
on the cusps $A_i, B_i, C_i \ (0 \leq i \leq 3)$
are calculated as follows:
\[
\begin{array}{|c||c|c|c|c||c|c|c|c||c|c|c|c|}
\hline
\quad & A_0 & A_1 & A_2 & A_3 & B_0 & B_1 & B_2 & B_3 & C_0 & C_1 & C_2 & C_3 \\
\hline
\theta_1 & A_0 & A_1 & A_2 & A_3 & B_1 & B_2 & B_3 & B_0 & C_1 & C_2 & C_3 & C_0 \\
\hline
\theta_2 & B_0 & B_1 & B_2 & B_3 & A_0 & A_1 & A_2 & A_3 & C_3 & C_2 & C_1 & C_0 \\
\hline
\theta_3 & C_0 & C_1 & C_2 & C_3 & A_3 & A_2 & A_1 & A_0 & B_3 & B_2 & B_1 & B_0 \\
\hline
\end{array}
\]

The actions of $\theta_1, \theta_2, \theta_3$
on the points $P_1, P_2, P_3$
are calculated as follows:
\[
\begin{array}{|c||c|c|c|}
\hline
\quad & P_1 & P_2 & P_3 \\
\hline
\theta_1 & [-2^{1/4} : \zeta_8 : 1] & [\zeta_8^3 : 2^{1/4} \zeta_4 : 1] &
[-2^{-1/4} \zeta_4 : -2^{-1/4} : 1] \\
\hline
\theta_2 & P_2 & P_1 & P_3 \\
\hline
\theta_3 & [2^{-1/4} \zeta^3_4 : -2^{-1/4} : 1] & [2^{1/4} : \zeta_8^5 : 1] &
[\zeta_8^7 : 2^{1/4} \zeta^3_4 : 1] \\
\hline
\end{array}
\]

Therefore,
with the aid of computer algebra systems,
the actions of $\theta_1, \theta_2, \theta_3$
on the elements $e_1, e_2, e_3, e_4, e_5, e'_6$
are calculated as follows:
\[
\begin{array}{|c||c|c|}
\hline
\quad & e_1 & e_2 \\
\hline
\theta_1 &
[A_1 - B_1] = e_1 + 3e_3 &
[A_2 - B_1] = e_2 + 3e_3 \\
\hline
\theta_2 &
\begin{array}{c} [B_1 - A_0] = \\ 2e_1 + 3e_2 + 3e_3 + 3e_4 \end{array} &
[B_2 - A_0] = 2e_1 + 3e_2 + 2e_3 \\
\hline
\theta_3 &
\begin{array}{c} [C_1 - A_3] = \\ 3e_1 + 2e_2 + 2e_3 + e_4 + e_5 \end{array} &
\begin{array}{c} [C_2 - A_3] = \\ 2e_1 + 3e_2 +  e_3 + 2e_4 + 3e_5 + 2e'_6 \end{array} \\
\hline
\end{array}
\]

\[
\begin{array}{|c||c|c|}
\hline
\quad & e_3 & e_4 \\
\hline
\theta_1 &
[B_2 - B_1] = 3e_3 + e_4 &
[B_3 - B_1] = 2 e_3 + 3e_4 \\
\hline
\theta_2 &
[A_1 - A_0] = 3e_1 + 3e_2 + 2e_3 + 3e_4 &
[A_2 - A_0] = 2e_1 + 2e_3 + 3e_4 \\
\hline
\theta_3 &
[A_2 - A_3] = 3e_1 + 3e_2 + 2e_3 + e_4 &
[A_1 - A_3] = 2e_2 + 2e_3 + e_4 \\
\hline
\end{array}
\]

\[
\begin{array}{|c||c|c|}
\hline
\quad & e_5 \\
\hline
\theta_1 &
[C_2 - B_1] = 3e_1 + e_2 + 2e_3 + e_4 + 3e_5 + 2e'_6 \\
\hline
\theta_2 & [C_2 - A_0] = e_1 + e_3 + 3e_5 + 2e'_6 \\
\hline
\theta_3 &
[B_2 - A_3] = 3e_1 + 2e_2 + 2e_3 + 2e_4 \\
\hline
\end{array}
\]

\[
\begin{array}{|c||c|}
\hline
\quad & e'_6 \\
\hline
\theta_1
 &
\begin{array}{c}
[[-2^{1/4} : \zeta_8 : 1] + [\zeta_8^3 : 2^{1/4} \zeta_4 : 1] +
[-2^{-1/4} \zeta_4 : -2^{-1/4} : 1] - 3 B_1] \\
= 2e_2 + e_3 + e_5 + e'_6
\end{array} \\
\hline
\theta_2
 & 
[P_2 + P_1 + P_3 - 3 A_0] = 2e_1 + e_2 + 2e_3 + e_4 + e'_6 \\
\hline
\theta_3
 & 
\begin{array}{c}
[[2^{-1/4} \zeta^3_4 : -2^{-1/4} : 1] + [2^{1/4} : \zeta_8^5 : 1] +
[\zeta_8^7 : 2^{1/4} \zeta^3_4 : 1] - 3 A_3]\\
= e_1 + 2e_2 + 3e_3 + 3e_4 + e_5 + 3e'_6
\end{array} \\
\hline
\end{array}
\]
\end{proof}

\section{How to find the element $e'_6$}
\label{Appendix:Experimental}

The $4$-torsion point $e'_6$ (defined in Section \ref{Section:Notation})
plays an important role in this paper.
The authors found it experimentally.
Here are the methods of the authors to find this element.

Since every divisor of degree $3$ on $F_4$ is linearly equivalent to
an effective divisor,
the authors tried to find a divisor $D$ of degree $3$ such that
$[D - 3 B_0]$ gives a $4$-torsion point on $\Jac(F_4)$
which does not belong to the subgroup $\mathscr{C}$
generated by divisors supported on the cusps.

The authors first studied possible candidates of the field of definition of
such a divisor $D$. (In fact, the following strategy is inspired by
recent developments on Iwasawa theory and extensions between
mod $p$ Galois representations.)
Let $e'_6$ be a $4$-torsion point on $\Jac(F_4)$
which does not belong to $\mathscr{C}$.
Let $K$ be the smallest extension of $\Q(\zeta_8)$ where $e'_6$ is defined.
Since the $4$-torsion points $\Jac(F_4)[4]$
are generated by $\mathscr{C}$ and $e'_6$,
the extension $K/\Q$ corresponds to the kernel of the mod $4$ Galois
representation
\[ \rho_4 \colon \Gal(\overline{\Q}/\Q) \to \Aut(\Jac(F_4)[4]) \cong \GL_6(\Z/4\Z). \]
In particular, we see that $K/\Q$ is a Galois extension.

Since all of the $2$-torsion points on $\Jac(F_4)$ are defined
over $\Q(\zeta_8)$,
for each $s \in \Gal(K/\Q(\zeta_8))$,
the difference $s(e'_6) - e'_6$ is killed by $2$.
Hence we have an injective homomorphism
\[ \psi \colon \Gal(K/\Q(\zeta_8)) \to \Jac(F_4)[2], \qquad
   s \mapsto s(e'_6) - e'_6. \]
We see that
the Galois group $\Gal(K/\Q(\zeta_8))$ is isomorphic to an elementary abelian group
of type $(2,\ldots,2)$.

Since $\Jac(F_4)$ has good reduction outside $2$,
the extension $K/\Q$ is unramified outside $2$.
Moreover, $K$ must satisfy the following condition:
every finite place of $\Q(\zeta_8)$ above $3$ splits in $K/\Q(\zeta_8)$.
(Here is a proof of this claim.
Let $v$ be a finite place of $\Q(\zeta_8)$ above $3$,
and take a finite place $w$ of $K$ above $v$.
Since $\psi$ is injective, the Frobenius element $\mathrm{Frob}_w$ at $w$
has non-zero image under $\psi$.
Hence $\mathrm{Frob}_w$ acts non-trivially on the reduction of $e'_6$ modulo $w$.
It contradicts the fact that the residue field at $v$ is $\F_9$,
and all of the $4$-torsion points on the reduction modulo $3$ of $\Jac(F_4)$
are defined over $\F_9$; see Lemma \ref{Lemma:MordellWeilGroup2} (3).)

The authors \textit{guessed} $K/\Q(\zeta_8)$ is a quadratic extension,
and tried to find candidates of $K$.
Using Sage, it is not difficult to check that $\Q(\zeta_8)$ has exactly
$7$ quadratic extensions which are unramified outside $2$.
Among them, exactly $3$ quadratic extensions are Galois over $\Q$.
Looking at the finite places above $3$,
it turned out that $\Q(2^{1/4},\zeta_8)$ is the only quadratic extension
of $\Q(\zeta_8)$ satisfying all of the above conditions.
Then, the authors \textit{guessed} $K$ is $\Q(2^{1/4},\zeta_8)$,
and tried to find a divisor of degree $3$ defined over
$\Q(2^{1/4},\zeta_8)$.
It was not difficult to find
the $\Q(2^{1/4},\zeta_8)$-rational points $P_1,P_2,P_3$.
(See Section \ref{Section:Notation} for the definition of these points.)
Calculating the divisor class of $P_1 + P_2 + P_3 - 3 B_0$ using Singular,
the authors finally found the divisor class
$e'_6 := [P_1 + P_2 + P_3 - 3 B_0]$
is what they were looking for.

\section{Methods of calculation}
\label{Appendix:MethodsCalculation}

In this appendix,
we give some remarks on the methods of calculation.

The key computational results in this paper are
the identity $2 e'_6 = 2e_2 + 2e_4 + e_6$
in Proposition \ref{Proposition:KeyDivisor},
and the calculation of the number of effective divisor classes
in $\Pic^2(F_4 \otimes_{\Q} \Q(\zeta_8))$
in the proof of Theorem \ref{MainTheorem4}.

Here is a sample source code for Singular which
performs necessary calculation.
{\small
\begin{verbatim}
LIB "divisors.lib";
ring r=(0,d),(x,y,z),dp; minpoly = d^8 - 4*d^6 + 8*d^4 - 4*d^2 + 1;
ideal I = x^4 + y^4 - z^4; qring Q = std(I);
number z8 = (2*d^6 - 7*d^4 + 11*d^2 - 1)/3; number z4 = z8^2;
number a = (d^7 - 5*d^5 + 10*d^3 - 8*d)/3;
divisor A0 = makeDivisor(ideal(x, y-z),    ideal(1));
divisor A1 = makeDivisor(ideal(x, y-z4*z), ideal(1));
divisor A2 = makeDivisor(ideal(x, y+z),    ideal(1));
divisor A3 = makeDivisor(ideal(x, y+z4*z), ideal(1));
divisor B0 = makeDivisor(ideal(x-z,    y), ideal(1));
divisor B1 = makeDivisor(ideal(x-z4*z, y), ideal(1));
divisor B2 = makeDivisor(ideal(x+z,    y), ideal(1));
divisor B3 = makeDivisor(ideal(x+z4*z, y), ideal(1));
divisor C0 = makeDivisor(ideal(x-z8*y,    z), ideal(1));
divisor C1 = makeDivisor(ideal(x-z8*z4*y, z), ideal(1));
divisor C2 = makeDivisor(ideal(x+z8*y,    z), ideal(1));
divisor C3 = makeDivisor(ideal(x+z8*z4*y, z), ideal(1));
divisor E1 = A1 + multdivisor(-1, B0);
divisor E2 = A2 + multdivisor(-1, B0);
divisor E3 = B1 + multdivisor(-1, B0);
divisor E4 = B2 + multdivisor(-1, B0);
divisor E5 = C1 + multdivisor(-1, B0);
divisor E6 = A1 + B1 + C1 + A2 + B2 + C2 + multdivisor(-6, B0);
divisor P1 = makeDivisor(ideal(x-a*z4*z, y-z8*z), ideal(1));
divisor P2 = makeDivisor(ideal(x-z8*z, y-a*z4*z), ideal(1));
divisor P3 = makeDivisor(ideal(x-a^(-1)*z, y-a^(-1)*z), ideal(1));
divisor EE6 = P1 + P2 + P3 + multdivisor(-3, B0);
print("Is 2e'_6 equal to 2e_2 + 2e_4 + e_6?");
if(linearlyEquivalent(multdivisor(2, EE6), multdivisor(2, E2)
  + multdivisor(2, E4) + E6)[1] != 0)
  { print("Yes"); }else{ print("No"); }
divisor D1 = multdivisor(2, P1 + P2 + P3) + multdivisor(-1, A1)
  + multdivisor(-3, A2) + multdivisor(4, B0) + multdivisor(-1, B1)
  + multdivisor(-3, B2) + multdivisor(-1, C1 + C2);
number c1 = d^6 - 2*d^4 + d^2 + 7;
number c2 = 2*d^6 - 10*d^4 + 20*d^2 - 13;
number c3 = 22*d^7 + 9*d^6 - 86*d^5 - 36*d^4 + 166*d^3 + 63*d^2 - 86*d - 24;
number c4 = 10*d^7 - 2*d^6 - 26*d^5 + 7*d^4 + 22*d^3 - 20*d^2 + 46*d - 14;
number c5 = 22*d^6 - 77*d^4 + 154*d^2 - 44;
divisor D2 = makeDivisor(
  ideal( 33*(x^3 + x*y^2 + x*y*z - x*z^2 - y^2*z - y*z^2)
  + c3*(-x^2*y + x*y*z) + c4*(x^2*z + x*y*z - x*z^2 - y*z^2)
  + c5*(x*z^2 - z^3)),
  ideal(3*(x^3 + y^3 + z^3)
  + c1*(x^2*y + x^2*z + x*y^2 + x*y*z + y^2*z - z^3)
  - c2*(x*y*z + x*z^2 + y*z^2 + z^3)));
print("Proof of Proposition 4.1: calculation of div(f)");
if(isEqualDivisor(D1,D2)==1){ print("OK"); }else{ print("Not OK"); }
print("The number of effective elements in Pic^2(F_4)(Q(zeta_8))");
int c,p1,p2,p3,p4,p5,p6; c=0;
for(p1=0;p1<=3;p1++){ for(p2=0;p2<=3;p2++){ for(p3=0;p3<=3;p3++){
for(p4=0;p4<=3;p4++){ for(p5=0;p5<=3;p5++){ for(p6=0;p6<=1;p6++){
if(size(globalSections(multdivisor(p1, E1) + multdivisor(p2, E2)
  + multdivisor(p3, E3) + multdivisor(p4, E4) + multdivisor(p5, E5)
  + multdivisor(p6, E6) + multdivisor(2, B0))[1]) == 1){ c=c+1; }}}}}}};
print(c);
\end{verbatim}
}

Generally speaking, the calculation of divisor classes over
a number field of large degree (such as $\Gal(\Q(2^{1/4},\zeta_8)$)
is very slow.
Here is a simple trick to reduce the amount of computer calculation:
in order to confirm identities between $4$-torsion points on $\Jac(F_4)$,
it is enough to confirm them over any finite field of characteristic different from $2$.
In fact, the authors took the prime number $73$,
which is the smallest prime number which splits completely in
$\Q(2^{1/4},\zeta_8)$.
They confirmed identities in this paper between the $4$-torsion points
by calculating them over $\F_{73}$.
The calculation over $\F_{73}$ is much faster
because $\F_{73}$ is a prime field.
The authors chose $10 \pmod{73}$ as a primitive $8$-th root of unity in $\F_{73}$,
and $18 \pmod{73}$ as a fourth root of $2$ in $\F_{73}$
because they satisfy the relation $10 + 10^7 \equiv 18^2 \pmod{73}$.

\section*{Acknowledgments}

The authors would like to thank Takuya Yamauchi
for asking a question on the calculation of the Galois image
inside the symplectic similitude group $\mathrm{GSp}_6(\Z/4\Z)$.
Most of calculations, especially the calculations of linear equivalences of
divisors, were done with the aid of the computer algebra systems Maxima \cite{Maxima},
Sage \cite{Sage}, and Singular \cite{Singular}.
Especially, Singular's library \texttt{divisors.lib}
\cite{Singular:DivisorsLib} helped the authors very much.

The work of the first author was supported by
JSPS KAKENHI Grant Number 13J01450 and 16K17572.
The work of the second author 
was supported by JSPS KAKENHI Grant Number 20674001 and 26800013.
The work of the third author
was supported by JSPS KAKENHI Grant Number 26800011 and 18H05233.
This work was supported by the Sumitomo Foundation FY2018 Grant
for Basic Science Research Projects (Grant Number 180044).


\begin{thebibliography}{99}

\bibitem{Aigner}
  Aigner, A., \textit{Uber die M\"oglichkeit von $x^4 + y^4 = z^4$
  in quadratischen K\"orpern},
  Jber.\ Deutsch.\ Math.-Verein. \textbf{43} (1934), 226-229.

\bibitem{BoschLuetkebohmertRaynaud}
  Bosch, S., L\"utkebohmert, W., Raynaud, M., \textit{N\'eron models},
  Ergebnisse der Mathematik und ihrer Grenzgebiete (3), \textbf{21}.
  Springer-Verlag, Berlin, 1990.

\bibitem{Faddeev}
  Faddeev, D.\ K., \textit{Group of divisor classes on the curve
  defined by the equation $x^4 + y^4 = 1$},
  Soviet Math.\ Dokl.\ \textbf{1} (1960), 1149-1151
  (Dokl.\ Akad.\ Nauk SSSR \textbf{134} (1960), 776-777 (Russian original)).

\bibitem{Howe:WeilPairing}
  Howe, E.\ W., \textit{The Weil pairing and the Hilbert symbol},
  Math.\ Ann.\ \textbf{305} (1996), no.\ {2}, 387-392.

\bibitem{IshitsukaItoOhshita:KleinFermatQuartic}
  Ishitsuka, Y., Ito, T., Ohshita, T.,
  \textit{On algorithms to obtain linear determinantal representations of
  smooth plane curves of higher degree}, JSIAM Letters \textbf{11} (2019), 9-12.

\bibitem{Ito:ManinMumford}
  Ito, T., \textit{On the Manin-Mumford conjecture for abelian varieties
  with a prime of supersingular reduction}, 
  Proc.\ Amer.\ Math.\ Soc.\ \textbf{134} (2006), no.\ {10}, 2857-2860.

\bibitem{Kenku}
  Kenku, M.\ A.,
  \textit{Rational $2^{n}$-torsion points on elliptic curves defined
  over quadratic fields},
  J.\ London Math.\ Soc.\ (2) \textbf{11} (1975), no.\ {1}, 93-98.

\bibitem{KimingRustom}
  Kiming, I., Rustom, N.,
  \textit{Dihedral group, $4$-torsion on an elliptic curve, and a peculiar eigenform modulo $4$},
  SIGMA 14 (2018), 057, 13 pages.

\bibitem{Klassen:Thesis}
  Klassen, M.\ J., \textit{Algebraic points of low degree on curves of low rank},
  Thesis (Ph.D.)-The University of Arizona.\ 1993.\ 51 pp,
  \texttt{http://hdl.handle.net/10150/186403}.

\bibitem{KlassenSchaefer}
  Klassen, M.\ J., Schaefer, E.\ F.,
  \textit{Arithmetic and geometry of the curve $y^3+1=x^4$},
  Acta Arith.\ \textbf{74} (1996), no.\ {3}, 241-257.

\bibitem{Mordell}
  Mordell, L.\ J., \textit{The Diophantine equation $x^4 + y^4 = 1$
  in algebraic number fields}, Acta Arith.\ \textbf{14} (1968), 347-355.

\bibitem{Mumford:AbelianVarieties}
  Mumford, D., \textit{Abelian varieties},
  Tata Institute of Fundamental Research Studies in Mathematics, No.\ {5},
  Bombay; Oxford University Press, London 1970.

\bibitem{Poonen:RationalPoints}
  Poonen, B., \textit{Rational points on varieties},
  Graduate Studies in Mathematics, {186}.\ American Mathematical Society,
  Providence, RI, 2017.

\bibitem{Rohrlich}
  Rohrlich, D.\ E., \textit{Points at infinity on the Fermat curves},
  Invent.\ Math.\ \textbf{39} (1977), no.\ {2}, 95-127.

\bibitem{MShimura}
  Shimura, M., \textit{Defining equations of modular curves $X_0(N)$},
  Tokyo J.\ Math.\ \textbf{18} (1995), no.\ {2}, 443-456.

\bibitem{TuYang}
  Tu, F.-T., Yang, Y., \textit{Defining equations of $X_0(2^{2n})$},
  Osaka J.\ Math.\ \textbf{46} (2009), no.\ {1}, 105-113.

\bibitem{Tzermias:Automorphism}
  Tzermias, P., \textit{The group of automorphisms of the Fermat curve},
  J.\ Number Theory \textbf{53} (1995), no.\ {1}, 173-178.

\bibitem{Sage}
  SageMath, the Sage Mathematics Software System (Version 8.1),
  The Sage Developers, 2017, \texttt{http://www.sagemath.org}.

\bibitem{Singular}
  Decker, W., Greuel, G.-M., Pfister, G., Sch{\"o}nemann, H.,
  \textit{Singular} (Version 4-1-1) --- A computer algebra system for polynomial computations,
  2018, \texttt{http://www.singular.uni-kl.de}.

\bibitem{Singular:DivisorsLib}
  Boehm, J., Kastner, L., Lorenz, B., Sch{\"o}nemann, H. Ren, Y.,
  {\tt divisors.lib},
  A Singular 4-1-1 library for computing divisors and P-Divisors, 2017.

\bibitem{Maxima}
  Maxima.sourceforge.net.\ Maxima, a Computer Algebra System (Version 5.41.0),
  2015, \texttt{http://maxima.sourceforge.net/}.

\end{thebibliography}
\end{document}